\documentclass[a4paper,11pt,oneside,openany]{article}

\usepackage[utf8]{inputenc}
\usepackage{amssymb}
\usepackage{amsmath,amsthm}

\usepackage{mathrsfs}
\usepackage{xcolor}
\usepackage{latexsym}
\usepackage{epsfig}
\usepackage[margin=0.8in]{geometry}
\usepackage{authblk}
\usepackage[colorlinks=true, linkcolor=red, citecolor=blue, urlcolor=blue]{hyperref}

\usepackage[noend]{algpseudocode}
\usepackage[ruled,linesnumbered,noresetcount]{algorithm2e}

\usepackage{breakcites}
\usepackage{caption}
\usepackage{subcaption}
\usepackage{comment}
\usepackage{setspace}
\usepackage[numbers]{natbib}
\usepackage{bbm}
\usepackage{dsfont}
\usepackage{float}

\newtheorem{theorem}{Theorem}[section]

\newtheorem{lemma}[theorem]{Lemma}

\theoremstyle{definition}

\numberwithin{equation}{section}

\newenvironment{noindentsection}{\setlength{\parindent}{0pt}}{}

\begin{document}
\title{Adaptive Divide and Conquer with Two Rounds of Communication}

\author[1]{Niladri Kal}
\author[2]{Botond Szabó}
\author[1]{Rajarshi Guhaniyogi}
\author[3]{Natesh Pillai}
\author[4]{Debdeep Pati}

\affil[1]{Department of Statistics, Texas A\&M University, USA}
\affil[2]{Department of Decision Sciences, Bocconi University, Italy}
\affil[3]{Department of Statistics, Harvard University, USA}
\affil[4]{Department of Statistics, University of Wisconsin\textendash Madison, USA}

\date{}
\maketitle
\vspace{-2em}
\begin{abstract}
        \noindent
        We introduce a two-round adaptive communication strategy that enables rate-optimal estimation in the white noise model without requiring prior knowledge of the underlying smoothness. In the first round, local machines send summary statistics using $(\log_2(n))^2$ bits to enable the central machine to select the tuning parameters of the procedure. In the second round, another set of statistics are transmitted using optimal number of bits, enabling the central machine to aggregate and produce a final estimator that adapts to the true smoothness level. This approach achieves optimal convergence rates across a wider range of regularities, offering a potential improvement in the adaptability and efficiency of distributed estimation compared to existing one-round methods.
\end{abstract}

\section{Introduction}
In numerous recent applications, ranging from environmental science and forestry to imaging, large datasets have become ubiquitous. The sheer volume of these datasets often creates bottlenecks in analysis and storage when using a single machine. Moreover, many real-world scenarios involve data being partitioned across different organizations or locations, where strict security constraints limit the extent to which this data can be shared. Such considerations have resulted in a significant interest in the study of distributed statistical methods in the recent times \citep{Zhang2013Information-theoreticConstraints, Neiswanger2014EmbarrassingParallelMCMC, Kleiner2014AData, Deisenroth2015DistributedProcesses,Srivastava2015WASP:Posteriors, Zhang2015DivideRates, Scott2016BayesAlgorithm, Lee2017Communication-efficientRegression, Battey2018DistributedModels, Guhaniyogi2018Meta-kriging:Datasets,Guhaniyogi2019MultivariateKriging, Szabo2019AnMethods, Guhaniyogi2022DistributedPrior, Guhaniyogi2023DistributedData, Szabo2025AdaptationProcesses}. In a distributed setup, the data are divided across several machines. In each machine, the data is processed locally, and then the local results are suitably aggregated in a central machine to produce the final output. By leveraging de-centralized analysis in multiple machines, distributed systems facilitate parallel processing, which significantly reduces computation time, storage burden and allows for the efficient handling of large datasets as compared to traditional, centralized systems. Besides enhancing speed, distributed computing is also useful in enhancing security because of the decentralized structure of data storage and processing. However, designing distributed procedures effectively is challenging. One key challenge is to decide how to partition the data and how to combine the local results. If too few machines are used, the computational benefits of distributed setup are limited. Whereas, if the data are split across many machines, then each machine receives only a small portion of the data, leading to high variance of the local estimators. This trade-off also influences the choice of hyperparameters, such as the regularization hyperparameter or the truncation levels, which control the smoothness of the estimator.

The performance of statistical procedures in high-dimensional or nonparametric settings often hinges on how well the tuning parameters are chosen. Selecting tuning parameters that allow too much flexibility can lead to high variance due to overfitting, whereas overly restrictive choices may lead to underfitting the true signal. This fundamental tension is classically known as the bias-variance trade-off. In distributed settings, the trade-off becomes even more delicate. When data are split across many machines, each local machine operates on a smaller dataset, and naive tuning can lead to either high bias due to over-smoothing or high variance if the model is made too flexible. \citet{Zhang2015DivideRates} suggested that the local machines should deliberately under-smooth by choosing regularization parameters that allow more model complexity than what would be optimal for their sample size, to control the bias. Although this increases the local variance, averaging across machines reduces the variance at the global level. Most of the existing distributed methods \citep{Zhang2015DivideRates, Shang2017ComputationalSpline, Chang2017DistributedRegression, Mucke2018ParallelizingAlgorithms,  Szabo2019AnMethods} assume knowledge of the underlying smoothness of the function to tune the hyperparameter optimally in order to achieve the bias-variance trade-off. However, in practice, the underlying smoothness is rarely known in advance. This raises a fundamental question: Can we develop a nonparametric distributed method that leverages data-driven tuning to achieve an optimal bias-variance trade-off?  

Data-driven selection of hyperparameters in distributed settings is challenging. Since the optimal choice often depends on the unknown smoothness of the underlying function, a naive approach would require aggregating the entire data at the central machine, a solution that is both communication intensive and practically infeasible. This motivates the need for a distributed procedure that enables data driven tuning of the hyperparameter without full data centralization, while still attaining statistical optimality. A recent work \citep{Szabo2025AdaptationProcesses} addresses this challenge in a spatially distributed regression framework, where the data are partitioned across machines based on location, rather than being randomly split across machines. In this setup, local Gaussian Process models are fitted on spatial sub-regions and aggregated via a mixture-of-experts strategy, where the weights depend on the input location. However, the methodology depends on the assumption that the data are inherently spatial and can be meaningfully partitioned along location-based boundaries, which may not hold true in more general distributed environments.

In this work, however, we consider the more general setting where the data of size $n$ are randomly partitioned across $m$ machines, without considering any spatial alignment. Our goal is to obtain a distributed nonparametric method that achieves this optimal bias-variance trade-off in terms of the mean squared error, without the knowledge of the underlying smoothness $s>0$ of the function. It is intuitive that if we do not have any restrictions on the communication between the local machines and the central machine, then we can essentially end up sending every observation to the central machine and carry out a non-distributed rate-optimal, adaptive estimation method in the central machine. On the other hand, if we have too little communication between the machines, then the best possible rate corresponds to the minimax rate with sample size $n/m$, as discussed in \citet{Szabo2020AdaptiveConstraints}, and hence it would be better to use a single machine. The situation becomes particularly interesting in the intermediate communication regime \citep{Zhu2018DistributedConstraints, Szabo2020AdaptiveConstraints}. A similar three-phase trade-off between communication budget and estimation accuracy was established in the parametric setting by \citet{Cai2024DistributedAlgorithms}, who showed that the minimax risk exhibits a localization phase, a refinement phase and an optimal-rate phase, with sharp thresholds separating these regimes. In the nonparametric setting, \citet{Szabo2020AdaptiveConstraints} showed that a rate-optimal distributed estimation strategy is indeed possible under one round of communication setup, provided at least (up to a logarithmic factor) $n^{1/(1+2s)}$ bits are allowed to be communicated from the local to the central machine, where s denotes the regularity of the functional parameter of interest. Transmitting less bits results in (polynomially) sub-optimal convergence rate. When $s$ is unknown, adaptation is possible over an interval $(s_{\min}, s_{\max})$ if the communication budget, $B$, is large enough, $B\geq n^{1/(1+2s_{\min})}$.  An analogous effect was observed by \citet{Cai2022DistributedAdaptation} in the parametric Gaussian mean model with unknown variance, where estimating the nuisance parameter similarly increases the communication required to attain the optimal rate. \citet{Cai2022DistributedAdaptationb} investigated adaptation in the Gaussian sequence model, deriving the sharp optimal rate and the exact additional communication cost required to achieve it when $s$ is unknown, and also proposed a procedure that attains this bound.  \citet{Szabo2022DistributedCommunication} also considered a one-round communication setting and showed that if the number of machines grows as a polynomial (of order smaller than 1/2, i.e, $\sqrt{n}$) in the total sample size, then there exists a distributed estimator that is adaptive over a range of smoothness levels determined by the order of the polynomial, while communicating the optimal number of bits. The drawback of this method is that if we want to fully utilize the distributed setting, then the range of smoothness levels over which the simultaneous adaptation to the optimal number of bits and the optimal rate is possible, becomes very narrow. Conversely, if we want to apply the adaptation over a wide range of smoothness levels, then a large number of machines cannot be used, thereby impeding the efficiency of the distributed method.

In this paper, we consider the problem of distributed signal estimation under the distributed signal-in-white-noise model introduced by \citet{Szabo2019AnMethods}. The ideas developed in this idealized yet analytically tractable framework lay a rigorous theoretical foundation for adaptive distributed inference in more practical and widely used models, such as the Gaussian Process (GP) regression. Distributed GP methods have been proposed in applied domains due to their flexibility and capacity for uncertainty quantification \citep{Guhaniyogi2022DistributedPrior, Guhaniyogi2023DistributedData}, although they do not adapt to unknown smoothness levels. In this paper, we propose a two-round communication procedure that achieves the minimax optimal estimation without prior knowledge of the underlying smoothness (Besov regularity) $s>0$ of the true signal, assuming that the true signal lies in the Besov space $B^s_{2,\infty}$. In the first round of communications, some summary statistics are sent from the local to the central machine in order to estimate a hyperparameter that depends on the underlying smoothness of the function. In the second round, using the knowledge of this estimated hyperparameter, another set of summary statistics are sent from the local machines, and the central machine aggregates these to form the final estimator. We show that this procedure, under mild conditions that the smoothness $s$ lies in an interval $(s_{\min},s_{\max})$, attains the optimal convergence rate of $n^{-s/(1+2s)}$ and with probability tending to one as $n\to\infty$, transmits the optimal number of bits (up to a logarithmic factor) per machine. The major contributions of this paper can be summarized as:
\begin{itemize}
    \item \textbf{Two round, task-separated communication:} Unlike one-round methods that attempt to compress all information at once, our approach deliberately separates communication into two stages-first, estimating the key hyperparameter, and then performing aggregation. This sequencing ensures that the final summaries are better informed and more accurate. While a form of two step communication is implicitly present in some divide and conquer methods \citep{Ng2014HierarchicalRegression,Jordan2019Communication-EfficientInference} as they often require an additional level of communication for tuning the hyperparameters, our procedure implements this in a systematic way by transmitting suitably chosen summary statistics, thereby achieving both optimal communication and optimal recovery. 
    \item \textbf{Broad adaptivity with practical implementation:} Our method transmits the optimal number of bits and adapts to a wider range of smoothness levels using estimators that are not only easy to compute but also have clear interpretability, making the procedure theoretically appealing and practically deployable.
    \item \textbf{Paradigm-agnostic inference:} We establish that the proposed method achieves optimal statistical guarantees under both frequentist and Bayesian frameworks. Its performance does not rely on a specific inferential paradigm, which makes it broadly applicable and robust across theoretical settings.
\end{itemize}

The paper is organized as follows. Section~\ref{sec:model-section} introduces the distributed signal-in-white noise model. After reviewing a non-adaptive distributed Bayesian method in Section~\ref{sec:nonAdaptive} as a benchmark, Section~\ref{sec:adaptive} presents our proposed adaptive procedure and establishes its theoretical properties under both frequentist and Bayesian perspectives. Section~\ref{sec:Simulation} evaluates its performance through simulation studies, and Section~\ref{sec:Conclusion} concludes with a discussion and directions for future research. The proofs are provided in Appendix.

\section{Distributed signal-in-white noise model}\label{sec:model-section}
We consider the problem of estimating an unknown function $ f_0 \in L^2[0,1] $ from noisy observations in the Gaussian white noise model. In the centralized setting, one observes a continuous-time process $X = (X_t : t \in [0,1])$ satisfying:
\[
X_t = \int_0^t f_0(s)\,ds + \frac{1}{\sqrt{n}} W_t,
\]
where $W$ is a standard Brownian motion, and $n$ represents signal-to-noise ratio. The parameter $n$ determines the difficulty of the estimation problem and is commonly interpreted as a proxy for sample size in this framework \citep{Tsybakov2009IntroductionEstimation, Gine2015MathematicalModels}.

Projecting the signal $f_0$ onto an orthonormal wavelet basis $\{ \psi_{ij} \}_{i \geq 0, j = 0, \dots, 2^i-1}$ of $L^2[0,1]$ (see Section~\ref{sec:wavelet} for details) as
\[
f_0(t) = \sum_{i=0}^\infty \sum_{j=0}^{2^i-1} \theta_{0,ij} \psi_{ij}(t),
\]
the problem of estimating the true signal $f_0$ from the data $X$ boils downs to the problem of estimating the sequence of wavelet coefficients $\theta_0=(\theta_{0,ij})_{i\geq0,\,j=0,\ldots,2^i-1}$ from the noisy observations $\{y_{ij}\}_{i\geq0,j=0,\ldots,2^i-1}$ satisfying:
\[
y_{ij} = \theta_{0,ij} + \frac{1}{\sqrt{n}} Z_{ij}, \quad Z_{ij} \overset{\text{iid}}{\sim} \mathcal{N}(0,1).
\]

We consider the distributed version of this signal-in-white noise model as introduced in \citet{Szabo2019AnMethods}. We assume that each of the $m$ machines observe a noisy version of the true coefficient $\theta_{0,ij}$ independently and in the $k^{th}$ machine, we observe $Y^{(k)}=(y_{ij}^{(k)})_{i\geq 0\,j=0,\ldots,2^i-1}, k=1,\ldots,m$ satisfying
\begin{align} \label{model}
y_{ij}^{(k)}=\theta_{0,ij}+\sqrt{\frac{m}{n}}Z_{ij}^{(k)};i\geq 0;\, j=0,\ldots,2^i-1,
\end{align}
where $Z_{ij}^{(k)}\stackrel{iid}{\sim}N(0,1)$. We denote the total collection of all observations by $Y^{(1:m)}=\left(Y^{(1)},\ldots,Y^{(m)}\right)$. Typically, it is assumed that $\theta_0$ belongs to some regularity class, and in our analysis we consider the Besov space:
\begin{align}
    \label{besov}
    B_{2,\infty}^{s}(L)=\bigg\{\theta\in\ell_2(L):\, \sup_{i\geq 0} 2^{2is}\sum_{j=0}^{2^i-1}\theta_{0,ij}^2\leq L \bigg\},
\end{align}
which is closely related to the Sobolev regularity class $S^{s}(L)$. In most practical scenarios, this smoothness parameter $s$ is not known and our goal is to construct a distributive, adaptive Bayesian procedure which estimates $\theta_0$ with the minimax rate without using any information about the true regularity parameter~$s$. However, as shown by \citet{Szabo2019AnMethods}, standard frequentist and Bayesian methods such as the distributed version of the empirical Bayes approach based on maximum marginal likelihood that perform well in the non-distributed setting can fail to achieve adaptation in the distributed setting. This motivates the development of an alternative empirical Bayes strategy better suited to the distributed framework.

\section{Non-adaptive distributed method}\label{sec:nonAdaptive}
We begin with a non-adaptive distributed Bayesian procedure that assumes prior knowledge of the underlying smoothness of the true signal. We propose the following prior on the wavelet coefficients that retains all the coefficients up to a chosen resolution level $\ell$: 
\begin{align*}
&\theta_{ij}^{(k)}\sim \mathds{1}_{[0:\ell]}(i)\cdot\mathcal{N}(0,1) + (1-\mathds{1}_{[0:\ell]}(i))\cdot\delta_0;\quad j=0,\ldots,2^i-1,\; k=1\ldots,m,
\end{align*}
where $\mathds{1}_{[0:\ell]}(i)$ is the indicator of the event $i\in\{0,\ldots,\ell\}$ and $\ell$ is the hyper-parameter of the prior to be chosen by the user.  

It is well known \citep{Szabo2019AnMethods} that in the distributed Bayesian setting, naively averaging subset posteriors without any correction can result in poor approximations of the full (non-distributed) posterior, leading to suboptimal contraction rates and unreliable uncertainty quantification. Two broad strategies for combining local posteriors have been explored in the literature: the product-of-experts (PoE) approach \citep{Hinton1999ProductsExperts,Scott2016BayesAlgorithm, Srivastava2015WASP:Posteriors}, where the global posterior is formed by multiplying local posteriors, and the mixture-of-experts (MoE) approach \citep{Kim2005AnalyzingProcesses,Vasudevan2009GaussianTerrain,Szabo2025AdaptationProcesses}, where the input space is partitioned across machines and local posteriors are combined piecewise over their respective regions.  The former typically requires correction to prevent over-concentration; common choices include raising the likelihood to the $m^{th}$ power \citep{Srivastava2015WASP:Posteriors, Guhaniyogi2023DistributedData} or by rescaling the Gaussian prior by $m$, as done in consensus Monte Carlo \citep{Scott2016BayesAlgorithm}. In contrast, recent work by \citet{Szabo2025AdaptationProcesses} shows that under spatial data partitioning, mixture-of-experts formulations can achieve adaptation without such corrections.  In this paper, we adopt the product-of-experts framework and consider rescaling the local priors accordingly. The resulting local posteriors are:
\[
\theta_{ij}^{(k)}\big|Y^{(k)}\sim \mathds{1}_{[0:\ell]}(i)\cdot\mathcal{N}\left(\frac{n}{n+1}y_{ij}^{(k)},\frac{m}{n+1}\right)+\big(1-\mathds{1}_{[0:\ell]}(i)\big)\cdot\delta_0;\quad j=0,\ldots,2^i-1,\; k=1,\ldots,m.
\]

Following the consensus Monte Carlo approach, where a draw from the global posterior is taken to be the average of single draws taken from each local posterior, the aggregated global posterior takes the form:
\[
\widetilde{\theta}_{ij}\big|Y^{(1:m)}\sim \mathds{1}_{[0:\ell]}(i)\cdot\mathcal{N}\bigg(\frac{n}{m(n+1)}\sum_{k=1}^my_{ij}^{(k)},\frac{1}{n+1}\bigg)+\big(1-\mathds{1}_{[0:\ell]}(i)\big)\cdot\delta_0; j=0,\ldots,2^i-1,\; k=1,\ldots,m.
\]

Balancing the squared bias and the variance shows that, for $\theta_0\in B_{2,\infty}^{s}(L)$, choosing $\ell$ to be of the order of $\log_2(n)/(1+2s)$ yields posterior contraction around the true $\theta_0$ with the optimal rate $n^{-s/(1+2s)}$ \citep{Gine2015MathematicalModels}. While this construction provides a valid posterior, its performance depends on the correct knowledge of the underlying smoothness of the function $s>0$. However, in general, the regularity hyperparameter $s$ is unknown, which might lead to a miscalibration of the threshold $\ell$, and hence, one has to consider a data-driven choice for this hyperparameter $\ell$.

\section{Adaptive distributed methods}\label{sec:adaptive}
\subsection{Frequentist adaptation}\label{sec:freq}
In this section, we first develop a fully frequentist procedure for adaptive estimation under the distributed signal-in-white-noise model. The frequentist construction serves as a useful stepping stone as it provides theoretical clarity and helps to better understand the difficulties of estimation when the smoothness of the underlying function is unknown. We begin by analyzing a simple thresholding estimator with a fixed thresholding level $\ell$. Specifically, we consider local estimators of the form
\begin{align}\label{Thresholding}
&\hat\theta_{ij}^{(k)}=y_{ij}^{(k)}\mathds{1}_{[0:\ell]}(i),\quad j=0,\ldots,2^i-1, k=1,\ldots,m.
\end{align}

This projection-based approach retains all coefficients up to the level $\ell$, setting all higher coefficients to zero. The main idea behind thresholding at level $\ell$ is to control the trade-off between bias and variance in estimating the underlying function. Lower resolution coefficients capture the main structure of the signal, whereas higher resolution coefficients are often dominated by noise. By truncating at an appropriately chosen level, one can reduce the estimation variance without incurring significant bias. In the non-adaptive setting in Section~\ref{sec:nonAdaptive}, this level $\ell$ is fixed based on known smoothness. However, the goal here is to select $\ell$ in a data-driven manner to adapt to unknown smoothness while preserving the optimal minimax convergence rate.

In \citet{Szabo2019AnMethods}, it was shown that the use of standard maximum marginal likelihood methods for estimating the hyperparameters may lead to suboptimal results. To address this, we introduce a novel data-driven approach to estimate the hyperparameter $\ell$. We consider an interactive two-round communication strategy: in the first step, local machines send some summary statistics to the central machine, where this information is aggregated to estimate the hyperparameter. The resulting quantity is then transmitted back to the local machines. In the second step, the local machines, using the received information, transmit another set of summary statistics to the central machine, which are then aggregated to produce the final distributed thresholding estimator.

A natural question that arises is what kind of summary statistics the local machines should send to the central machine in the first round. As shown in \citet{Szabo2019AnMethods}, marginal likelihood-based summary statistics are not appropriate, as they can lead to suboptimal contraction rates. Sending the complete set of observations or a high-dimensional linear transformation of it,  would impose a huge communication burden, practically reducing the setup to a non-distributed setting. We propose the average of the squared observations at each resolution level as the candidate statistic. Each local machine computes and transmits:
$T_i^{(k)}=\frac{1}{2^i}\sum_{j=0}^{2^i-1}(y_{ij}^{(k)})^2;\; i=0,1,\ldots I,\, j=0,\ldots,2^i-1,\, k=1,\ldots,m$, for some large $I$. These local statistics are then averaged at the central machine to obtain:
\begin{align*}
T_i&=\frac{1}{m}\sum_{k=1}^{m}T_i^{(k)}=\frac{1}{2^i}\sum_{j=0}^{2^i-1}\bigg( \frac{1}{m}\sum_{k=1}^{m}(y_{ij}^{(k)})^2\bigg).
\end{align*}

In view of model \eqref{model}, we can further decompose it as:
\begin{align*}
T_i&=\frac{1}{2^i}\sum_{j=0}^{2^i-1}\theta_{0,ij}^2+\frac{1}{2^i\sqrt{n}}\sum_{j=0}^{2^i-1}\theta_{0,ij}\frac{1}{\sqrt{m}}\sum_{k=1}^{m}Z_{ij}^{(k)}+ \frac{1}{2^i}\sum_{j=0}^{2^i-1} \frac{1}{n}\sum_{k=1}^m(Z_{ij}^{(k)})^2.
\end{align*}

Note that the first term is the average energy at the $i$th resolution level, the second term is a zero mean Gaussian noise term with variance $ n^{-1}2^{-2i}\sum_{j=0}^{2^i-1}\theta_{0,ij}^2$, while the third term is a scaled chi-squared distribution, $(n2^{i})^{-1}\chi^2_{m2^i}$, which has mean $m/n$ and variance $2mn^{-2}2^{-i}$. We define the adjusted summary statistic as:
$$\widetilde{T}_i=2^iT_i-2^i m/n,$$
which has expected value $\sum_{j=0}^{2^i-1}\theta_{0,ij}^2$, and standard deviation of the order $n^{-1/2}(\sum_{j=0}^{2^i-1}\theta_{0,ij}^2)^{1/2}+ 2^{i/2}m^{1/2}/n$. This adjustment effectively isolates the signal energy (i.e. $\ell_2$-norm) at each resolution level while accounting for the noise structure, making $\widetilde{T}_i$ suitable for our adaptive thresholding procedures.

To adaptively select the truncation level $\ell$ without prior knowledge of the underlying smoothness, we develop a procedure inspired by the classical method in \citet{Lepskii1991OnNoise}. Originally proposed in the context of non-distributed Gaussian white noise models, Lepskii's method selects the largest resolution level beyond which the finer scale estimators are statistically indistinguishable from the coarser scale estimators, preventing overfitting. We follow a similar philosophy in our hyperparameter estimation procedure. Since in our distributed setting, estimation is based on aggregated energy statistics rather than direct access to the full signal, we consider a modification of the Lepskii's method \citep{Gine2015MathematicalModels} to select the smallest resolution level $\ell$ beyond which the cumulative signal energy remains below a noise-dominated threshold at all finer levels.

If the unknown smoothness level $s$ is assumed to belong to the interval $(s_{\min},s_{\max})$, for some $0<s_{\min}<s_{\max}$ and that the number of machines is bounded from above by $m\leq n^{1/(1+2s_{\max})}$, then the estimated truncation level $\hat{\ell}$ is defined as:
\begin{align}
\hat{\ell}=\min\bigg\{\ell\in \mathcal{L}:\, \sum_{i=\ell}^{l}\widetilde{T}_i\leq \tau 2^l/n,\,\forall l\geq \ell, l\in\mathcal{L}  \bigg\}\wedge \frac{\log_2(n)}{1+2s_{\min}},\label{def: hyper:estimator}
\end{align}
where $\mathcal{L}=\left[\frac{\log_2 (n)}{1+2s_{\max}}, \frac{\log_2 (n)}{1+2s_{\min}}\right]$, and $\tau>2$. The notational convention $\emptyset\wedge c=c$ ensures that the estimator is well defined. We broadcast this estimated thresholding level to all the local machines. 

In the second round of communication, each machine transmits the wavelet coefficients up to the resolution level $\hat{\ell}$. The central machine then aggregates them by averaging over the machines to get the final distributed estimator of the coefficients as:
\[\hat{\theta}_{ij}=\frac{1}{m}\sum_{k=1}^m y_{ij}^{(k)}\mathds{1}_{[0:\hat{\ell}]}(i);\;j=0,\ldots,2^i-1.\]

Since, we denote the functional form of the parameter $\theta_0$ by $f_0$, the final distributed estimator of the function is given by:
\begin{align}
\hat{f}_n=\sum_{i=0}^{\hat{\ell}}\sum_{j=0}^{2^i-1}  \bigg(\frac{1}{m}\sum_{k=1}^m y_{ij}^{(k)} \bigg)\psi_{ij}.\label{def: distr:freq}
\end{align}

The two rounds of communication, as discussed above, are illustrated in Figure~\ref{fig:TwoRounds}. We now investigate the $L_2$-risk of this estimator under the assumption that the true coefficients $\theta_0\in B_{2,\infty}^s(L)$, for some $s\in(s_{\min},s_{\max})$.

\begin{theorem}\label{thm: freq:adapt}
The distributed estimator \eqref{def: distr:freq} adapts to the minimax rate, i.e. for every $s\in(s_{\min},s_{\max})$
\begin{align*}
\sup_{f_0\in B_{2,\infty}^{s}(L)}E_{f_0}\big\|\hat{f}_n-f_0 \big\|_2^2\leq Cn^{-2s/(1+2s)},
\end{align*}
for some constant $C>0$.
\end{theorem}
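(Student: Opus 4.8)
The plan is to decompose the risk as a bias term plus a variance term conditional on $\hat\ell$, and then control the behaviour of the random truncation level $\hat\ell$ on two events: the ``good'' event where $\hat\ell$ lies in a constant-factor window around the oracle level $\ell_s := \log_2(n)/(1+2s)$, and its complement. First I would write
\[
E_{f_0}\|\hat f_n - f_0\|_2^2 = E_{f_0}\Big[\underbrace{\sum_{i>\hat\ell}\sum_j \theta_{0,ij}^2}_{\text{bias}} + \underbrace{\sum_{i\le\hat\ell}\sum_j \Big(\tfrac1m\sum_k y_{ij}^{(k)} - \theta_{0,ij}\Big)^2}_{\text{variance}}\Big].
\]
The variance term, conditional on $\hat\ell$, has expectation $\tfrac1n\sum_{i\le\hat\ell} 2^i \le \tfrac{2^{\hat\ell+1}}{n}$; the Besov assumption gives the bias bound $\sum_{i>\hat\ell}\sum_j\theta_{0,ij}^2 \le L\sum_{i>\hat\ell}2^{-2is} \lesssim 2^{-2s\hat\ell}$. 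So the whole problem reduces to showing that $2^{\hat\ell}/n + 2^{-2s\hat\ell}$ is, in expectation, of order $n^{-2s/(1+2s)}$, which is exactly what an $\hat\ell$ concentrated near $\ell_s$ delivers since $2^{\ell_s} = n^{1/(1+2s)}$ balances the two.

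The core of the argument is therefore the analysis of $\hat\ell$ via the concentration of the adjusted statistics $\widetilde T_i$. Recall $E_{f_0}\widetilde T_i = \beta_i := \sum_j\theta_{0,ij}^2$ with standard deviation of order $n^{-1/2}\beta_i^{1/2} + 2^{i/2}m^{1/2}/n$. I would establish a uniform deviation bound: with probability $\to 1$, $|\widetilde T_i - \beta_i| \le C\big(n^{-1/2}\beta_i^{1/2} + 2^{i/2}\sqrt{\log n}\,/n\big)$ simultaneously for all $i\in\mathcal L$ — the Gaussian cross term is handled by a standard sub-Gaussian tail and a union bound over the at most $\log_2 n$ levels in $\mathcal L$, and the centred chi-square term $\tfrac{1}{n2^i}(\chi^2_{m2^i} - m2^i)$ by Laurent–Massart (Bernstein-type) tails, again union-bounded. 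Note that on $\mathcal L$ we have $2^i \le n^{1/(1+2s_{\min})}$ and $m\le n^{1/(1+2s_{\max})}$, so the ``noise floor'' $2^{i/2}\sqrt{m\log n}/n$ is $o(n^{-s/(1+2s)})$ wide enough to be below the oracle-level signal but not so small as to cause spurious selection — this is where the constant $\tau>2$ enters.

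On this high-probability event I would prove a two-sided sandwich for $\hat\ell$. For the \emph{upper} bound $\hat\ell \le \ell_s(1+o(1))$: by the Besov bound, for $\ell$ slightly above $\ell_s$ the true cumulative energy $\sum_{i=\ell}^{l}\beta_i \lesssim 2^{-2s\ell}$ is already below $\tau 2^l/(2n)$ for all $l\ge\ell$ in $\mathcal L$, and the deviation bound shows $\sum_{i=\ell}^l\widetilde T_i \le \sum_{i=\ell}^l\beta_i + (\text{noise}) \le \tau 2^l/n$, so the defining condition in \eqref{def: hyper:estimator} is met and $\hat\ell$ cannot exceed this level; the explicit cap $\log_2(n)/(1+2s_{\min})$ in the definition handles the boundary. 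For the \emph{lower} bound $\hat\ell \ge \ell_s(1-o(1))$: I must rule out that the stopping condition triggers too early, which I would do by a case split — either the true signal has non-negligible energy at some level between $\hat\ell$ and $\ell_s$ (so $\sum_{i=\hat\ell}^{l}\beta_i$ is large for a suitable $l$, violating the threshold once the $\widetilde T_i$ are close to $\beta_i$), contradicting $\hat\ell$ being the minimiser; or the signal energy above $\hat\ell$ is already small, in which case $\sum_{i>\hat\ell}\beta_i \lesssim \text{(noise at level }\hat\ell) \lesssim 2^{\hat\ell}/n$, and combined with the variance bound this \emph{directly} gives the optimal rate at that $\hat\ell$ even though it is below $\ell_s$. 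Translating $\hat\ell \in [\ell_s(1-o(1)),\,\ell_s(1+o(1))]$ back through $2^{\hat\ell}/n + 2^{-2s\hat\ell}$ yields the rate $n^{-2s/(1+2s)}$ up to a logarithmic-in-$n$ multiplicative slack, which one absorbs by a slightly more careful tracking of the $\sqrt{\log n}$ factors (or by noting it only costs a $\log$ factor that the Besov-space oracle rate tolerates — if a clean $n^{-2s/(1+2s)}$ is wanted, the log is killed because the window width enters only through $2^{-2s\hat\ell}$, which is polynomially smaller). Finally, on the complementary (vanishing-probability) event I bound the risk crudely: the bias is at most $\|f_0\|_2^2 \le L$ and the variance is at most $2^{\log_2(n)/(1+2s_{\min})+1}/n = o(1)$, so its contribution is $o(1)\cdot P(\text{bad event}) = o(n^{-2s/(1+2s)})$ provided the deviation bound's failure probability is made polynomially small (taking the union-bound constant large enough), which the Gaussian and chi-square tails permit.

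\textbf{Main obstacle.} The delicate point is the lower bound on $\hat\ell$: unlike classical Lepski where one compares estimators directly, here the stopping rule is phrased in terms of cumulative energies $\sum_{i=\ell}^{l}\widetilde T_i$, and the signal energy $\beta_i$ can be distributed across levels in an adversarial (only Besov-constrained, not lower-bounded) way. The resolution is the case split above — one does not actually need $\hat\ell$ to be close to $\ell_s$ for \emph{every} $f_0$; one only needs that \emph{whatever} $\hat\ell$ is selected, the pair $(2^{\hat\ell}/n,\ 2^{-2s\hat\ell})$ is balanced, and the stopping rule with $\tau>2$ is precisely engineered so that an early stop can only happen when the residual bias $\sum_{i>\hat\ell}\beta_i$ is itself controlled by $2^{\hat\ell}/n$. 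Making this implication quantitative and uniform over $B^s_{2,\infty}(L)$ and over $s\in(s_{\min},s_{\max})$ is the technical heart of the proof.
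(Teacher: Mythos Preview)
Your proposal is broadly correct and shares the paper's high-level structure: split the risk according to whether $\hat\ell$ under- or overshoots a deterministic oracle level $\ell^*\asymp \log_2(n)/(1+2s)$, use the stopping rule itself to control the bias in the undershoot case, and use exponential tail bounds on $P_{f_0}(\hat\ell=\ell)$ for the overshoot case. The two arguments differ mainly in execution.

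The paper's treatment of the undershoot case is more direct than yours and sidesteps what you flag as the main obstacle. It never attempts to lower-bound $\hat\ell$, never sets up a uniform deviation event, and never does your case split. Instead, on $\{\hat\ell\le\ell^*\}$ it writes
\[
\sum_{i=\hat\ell}^{\ell^*}\sum_{j}\theta_{0,ij}^2
=\sum_{i=\hat\ell}^{\ell^*}\widetilde T_i
-\frac{2}{\sqrt{mn}}\sum_{i=\hat\ell}^{\ell^*}\sum_j\theta_{0,ij}\sum_kZ_{ij}^{(k)}
-\frac1n\sum_{i=\hat\ell}^{\ell^*}\sum_{j,k}\big((Z_{ij}^{(k)})^2-1\big),
\]
invokes the stopping condition $\sum_{i=\hat\ell}^{\ell^*}\widetilde T_i\le\tau\,2^{\ell^*}/n$ \emph{as is} (this is the payoff of the rule being a cumulative-energy threshold), absorbs the Gaussian cross term via $|ab|\le(a^2+b^2)/2$, and bounds the remaining error terms \emph{in expectation} after extending their index range to the deterministic set $i\le\ell^*$. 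No $\sqrt{\log n}$, no good/bad event. Your deviation-event route would also succeed once the details are filled in, but the log-factor bookkeeping you worry about is genuinely avoidable this way.

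Two small gaps in your sketch. First, the variance term $\sum_{i\le\hat\ell}\sum_j(\bar y_{ij}-\theta_{0,ij})^2$ is not almost surely bounded and is not independent of the event you condition on, so on the bad event you cannot just multiply its (unconditional) expectation by $P(\text{bad event})$; the paper handles the overshoot side by Cauchy--Schwarz level-by-level,
\[
E_{f_0}\big[\|f_0-\hat f_n(\ell)\|_2^2\,\mathds{1}_{\hat\ell=\ell}\big]
\le\Big(E_{f_0}\|f_0-\hat f_n(\ell)\|_2^4\Big)^{1/2}P_{f_0}(\hat\ell=\ell)^{1/2},
\]
combined with a lemma giving $P_{f_0}(\hat\ell=\ell)\le C\exp(-c\tau 2^{\ell}/m)$ for $\ell>\ell^*$. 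Second, in your early-stop branch you write $\sum_{i>\hat\ell}\beta_i\lesssim 2^{\hat\ell}/n$; this should be $\lesssim 2^{\ell^*}/n$ (apply the stopping inequality at $l=\ell^*$, then control the tail $i>\ell^*$ by the Besov bound), and it is $2^{\ell^*}/n\asymp n^{-2s/(1+2s)}$ that delivers the target rate, not $2^{\hat\ell}/n$.
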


\begin{figure}[H]
    \centering
    \begin{subfigure}[h]{0.3\textwidth}
        \centering
        \includegraphics[width=\textwidth]{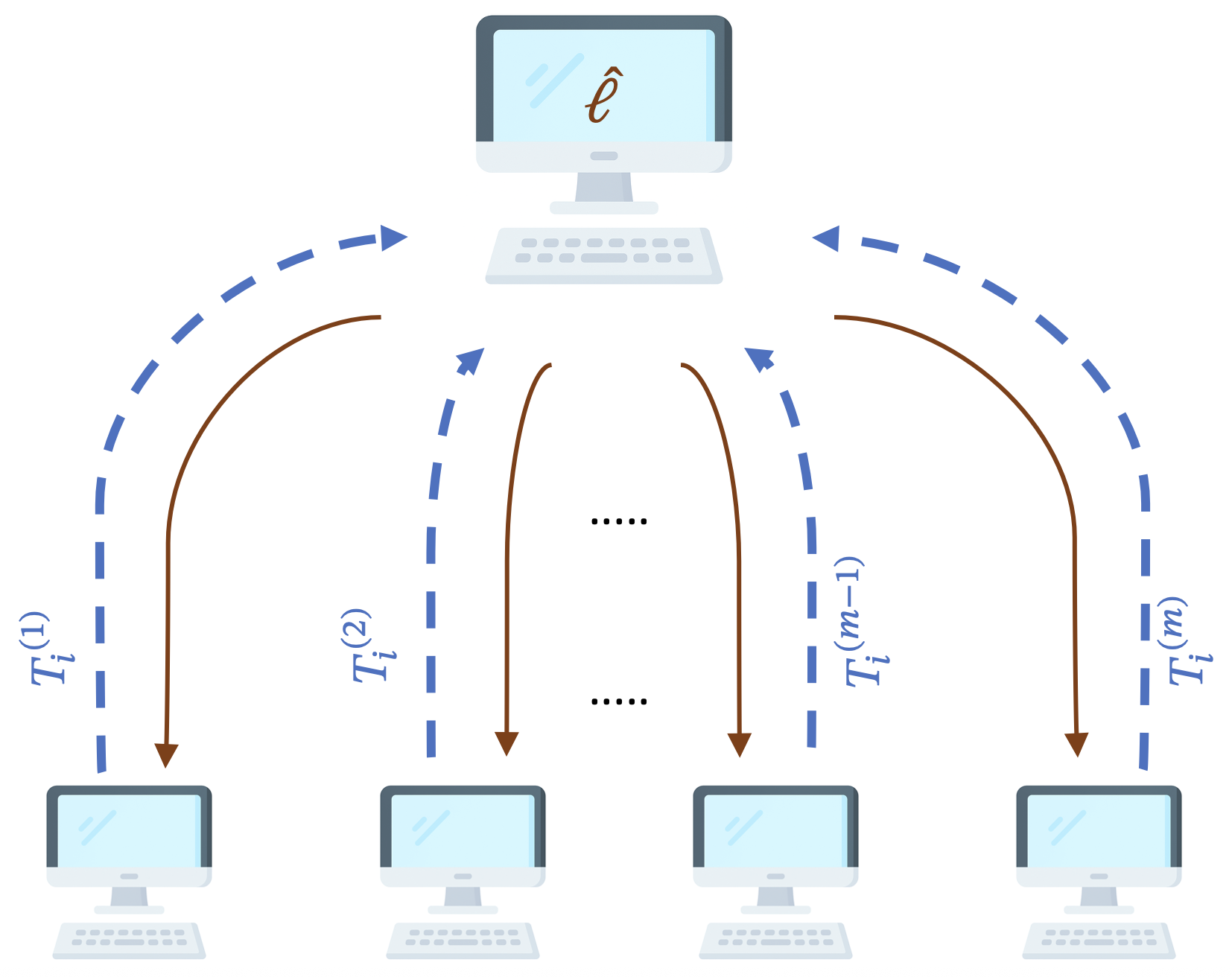}
        \subcaption{Round 1}
        \label{fig:Round1}
    \end{subfigure}\hspace{1.8cm}
    \begin{subfigure}[h]{0.3\textwidth}
        \centering
        \includegraphics[width=\textwidth]{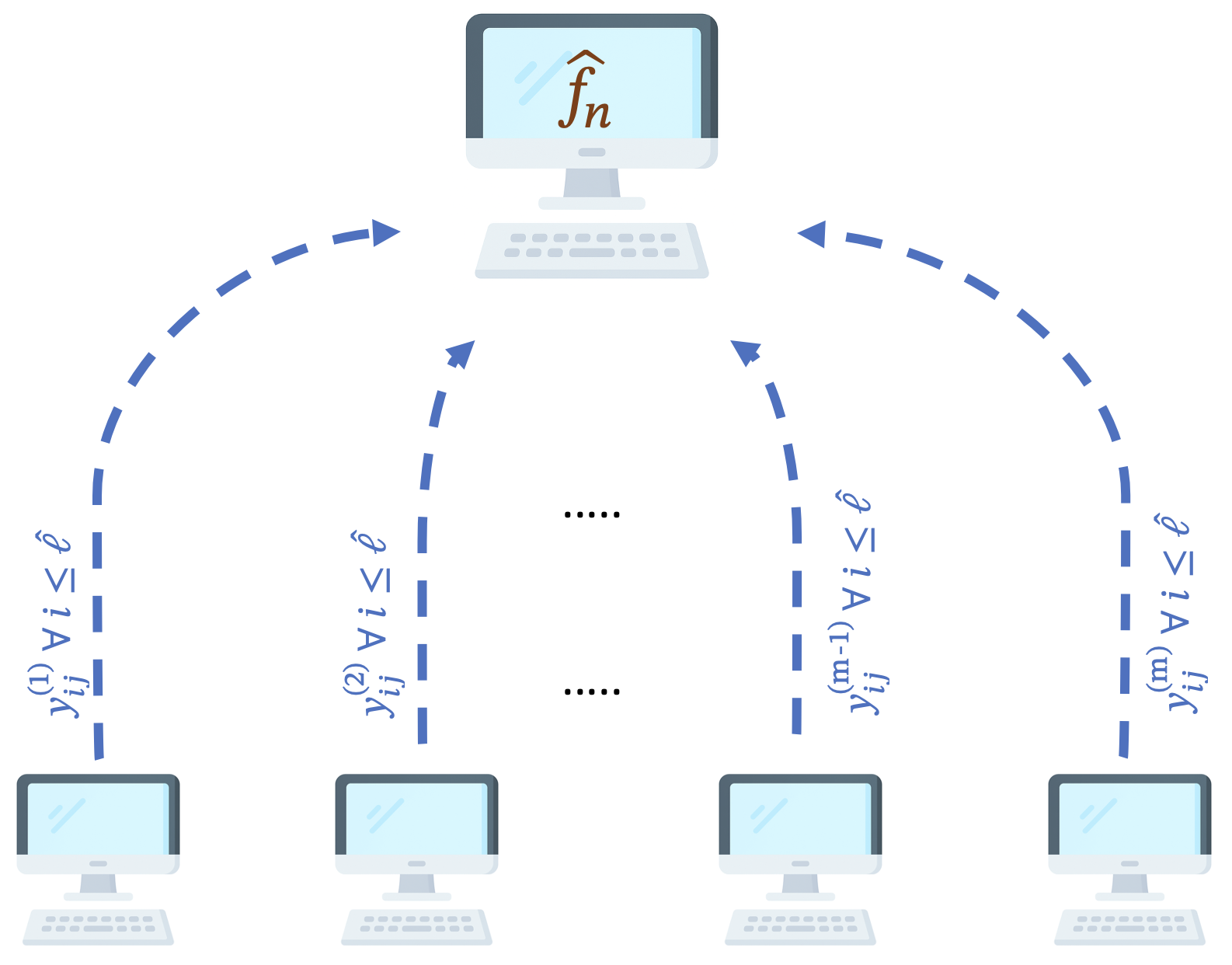}
        \subcaption{Round 2}
        \label{fig:Round2}
    \end{subfigure}
    \caption{Two rounds of communication}
    \label{fig:TwoRounds}
\end{figure}

The proposed adaptive distributed strategy transmits, up to a logarithmic factor, the optimal amount of information \citep{Szabo2022DistributedCommunication}, in the sense that for any $s> s_{\min}$ the total communication cost per local machine is, with probability tending to one as $n\to\infty$, of the order $n^{1/(1+2s)}$ times at most a logarithmic factor. In our setting, communication takes place in two rounds: the first transmits a summary statistic used to estimate the truncation level, and the second transmits empirical wavelet coefficients truncated at the estimated level. Each bounded real-valued quantity can be approximated with precision $n^{-C}$ using $O(C\log n)$ bits \citep{Szabo2020AdaptiveConstraints}, and we use this to quantify the total communication cost. The following result formalizes this communication guarantee.
\begin{theorem}\label{bits}
The interactive distributed method proposed in $ \eqref{def: distr:freq}$ satisfies that the number of bits $\hat{B}_i$ transmitted between the $i$th server and the central server is optimal, i.e. for every $s\in (s_{\min},s_{\max})$ we have that
\begin{align*}
\inf_{f_0\in B_{2,\infty}^s(L)}P_{f_0}\Big(\hat{B}_i\leq C \log (n) n^{1/(1+2s)}\Big)\stackrel{n\rightarrow \infty}{\rightarrow} 1.
\end{align*}
\end{theorem}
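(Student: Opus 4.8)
The plan is to bound the two rounds of communication separately and reduce everything to an upper bound on the random truncation level $\hat\ell$. I will repeatedly use the elementary encoding fact \citep{Szabo2020AdaptiveConstraints} that a real number known to lie in an interval of length polynomial in $n$ can be transmitted to precision $n^{-C}$ with $O(\log n)$ bits. In the first round the $i$th machine sends the statistics $T_i^{(k)}$ for $i=0,\dots,I$ with $I=\lceil\log_2(n)/(1+2s_{\min})\rceil$ (all that is needed to evaluate \eqref{def: hyper:estimator}), i.e. $O(\log n)$ numbers; from $(y_{ij}^{(k)})^2\le 2\theta_{0,ij}^2+2(m/n)(Z_{ij}^{(k)})^2$, the bound $\sum_j\theta_{0,ij}^2\le L$, and a Gaussian maximal inequality for $\max_{i\le I,\,j}|Z_{ij}^{(k)}|$, one gets that with probability $1-o(1)$ (uniformly over the Besov ball) every $T_i^{(k)}\in[0,n]$, so round one costs $O((\log n)^2)$ bits. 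In the second round the $i$th machine sends the $\sum_{i=0}^{\hat\ell}2^i=2^{\hat\ell+1}-1$ coefficients $y_{ij}^{(k)}$, $i\le\hat\ell$; since $\hat\ell\le\log_2(n)/(1+2s_{\min})$ deterministically, a union bound over the at most $2n^{1/(1+2s_{\min})}$ possible coefficients shows that with probability $1-o(1)$ each lies in $[-n^2,n^2]$ and costs $O(\log n)$ bits. Hence $\hat B_i=O((\log n)^2)+2^{\hat\ell}\,O(\log n)$, and it remains to show $2^{\hat\ell}\le C_0 n^{1/(1+2s)}$ with probability tending to one, uniformly over $f_0\in B_{2,\infty}^s(L)$.

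Fix $s\in(s_{\min},s_{\max})$ and set $\ell_1=\lceil\tfrac{\log_2 n}{1+2s}+c_s\rceil$ with $c_s=\tfrac{1}{1+2s}\log_2\tfrac{2L}{\tau(1-2^{-2s})}$; since $\tfrac{1}{1+2s}$ is strictly interior to $[\tfrac{1}{1+2s_{\max}},\tfrac{1}{1+2s_{\min}}]$ and the two endpoints of $\mathcal{L}$ differ by a quantity of order $\log n$, we have $\ell_1\in\mathcal{L}$ for all large $n$. Because $\hat\ell$ is, by \eqref{def: hyper:estimator}, the \emph{smallest} level in $\mathcal{L}$ passing the test there, it suffices to show that with probability $1-o(1)$ one has $\sum_{i=\ell_1}^{l}\widetilde T_i\le \tau 2^l/n$ for all $l\in\mathcal{L}$ with $l\ge\ell_1$. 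I would write $\widetilde T_i=\sum_j\theta_{0,ij}^2+G_i+D_i$, where $G_i=\tfrac1{\sqrt n}\sum_j\theta_{0,ij}U_{ij}$ with $U_{ij}=\tfrac1{\sqrt m}\sum_k Z_{ij}^{(k)}\sim\mathcal{N}(0,1)$ independent in $j$, so $G_i\sim\mathcal{N}\big(0,\tfrac1n\sum_j\theta_{0,ij}^2\big)$, and $D_i=\tfrac1n(\chi^2_{m2^i}-m2^i)$ is a recentred chi-square; both families are independent across $i$. For the signal term the Besov bound gives $\sum_{i\ge\ell_1}\sum_j\theta_{0,ij}^2\le \tfrac{L\,2^{-2\ell_1 s}}{1-2^{-2s}}$, and the choice of $c_s$ makes this $\le \tfrac{\tau}{2}2^{\ell_1}/n\le \tfrac{\tau}{2}2^l/n$; it then remains to bound $|\sum_{i=\ell_1}^l G_i|$ and $|\sum_{i=\ell_1}^l D_i|$ by $\tfrac{\tau}{4}2^l/n$, uniformly over the $O(\log n)$ admissible $l$.

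For the Gaussian piece, $\sum_{i=\ell_1}^l G_i\sim\mathcal{N}\big(0,\tfrac1n\sum_{i=\ell_1}^l\sum_j\theta_{0,ij}^2\big)$ has variance $\lesssim n^{-(1+4s)/(1+2s)}$, while the target is $\ge \tfrac{\tau}{4}2^{\ell_1}/n\asymp n^{-2s/(1+2s)}$, so a Gaussian tail bound gives failure probability $\exp(-c\,n^{1/(1+2s)})$ for each $l$, and a union bound over $l$ keeps the total $o(1)$. For the chi-square piece, $\sum_{i=\ell_1}^l D_i=\tfrac1n(\chi^2_{N_l}-N_l)$ with $N_l=m(2^{l+1}-2^{\ell_1})\le m2^{l+1}$, and the Laurent--Massart inequalities (with $x=2\log n$) bound the deviation by a constant times $\sqrt{m2^l\log n}+\log n$; this is $\le \tfrac{\tau}{4}2^l$ because $m\log n=o(2^{\ell_1})$, and this is exactly where the standing hypothesis $m\le n^{1/(1+2s_{\max})}$ enters: as $s<s_{\max}$ we get $m\log n\le n^{1/(1+2s_{\max})}\log n=o(n^{1/(1+2s)})\asymp 2^{\ell_1}$. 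Union bounding over $l$ again, and intersecting with the encoding events above (all of probability $1-o(1)$ uniformly over the Besov ball), we obtain $\hat\ell\le\ell_1$, hence $2^{\hat\ell}\le 2^{\ell_1}\le 2^{1+c_s}n^{1/(1+2s)}=:C_0 n^{1/(1+2s)}$ and $\hat B_i\le C\log(n)\,n^{1/(1+2s)}$, since $(\log n)^2=o(n^{1/(1+2s)}\log n)$; taking the infimum over $f_0\in B_{2,\infty}^s(L)$ gives the claim.

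I expect the genuinely delicate step to be this upper bound on $\hat\ell$, i.e. showing that the Lepskii-type stopping rule does not overshoot: the obstacle is to make the stochastic control of $\sum_{i=\ell_1}^l D_i$ (and of $\sum_{i=\ell_1}^l G_i$) hold \emph{simultaneously} over all admissible $l$ while keeping the additive constant $c_s$ in $\ell_1$ — hence $C_0$ — free of $n$, which is precisely what forces the restriction $m\le n^{1/(1+2s_{\max})}$. Everything else (the encoding counts, the Besov bias bound, the geometric-series estimates) is routine bookkeeping.
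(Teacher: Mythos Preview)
Your proposal is correct and uses the same probabilistic ingredients as the paper (Besov control of the signal part of $\widetilde T_i$, Gaussian tail bound for the cross term, chi-square concentration for the noise term, with the restriction $m\le n^{1/(1+2s_{\max})}$ entering through the chi-square piece). The organization differs slightly: the paper recycles its Lemma~\ref{lem: Lep:help}, which already bounds $P_{f_0}(\hat\ell=\ell)$ for each $\ell>\ell^*$, and simply sums these to get $P_{f_0}(\hat\ell>\ell^*)\to 0$, then invokes $2^{\ell^*}\lesssim n^{1/(1+2s)}$. You instead fix one explicit level $\ell_1\approx\tfrac{\log_2 n}{1+2s}+c_s$ and show directly that the Lepskii stopping criterion is satisfied there with probability $1-o(1)$, whence $\hat\ell\le\ell_1$. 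Your route is self-contained and gives an explicit constant $C_0=2^{1+c_s}$; the paper's route is shorter because the probabilistic lemma has already been proved for Theorem~\ref{thm: freq:adapt}. The encoding counts in both rounds are handled identically in spirit (the paper cites Lemma~2.3 of \citet{Szabo2020AdaptiveConstraints} where you invoke the same $O(\log n)$-bits-per-bounded-real fact).
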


\subsection{Bayesian adaptation}\label{sec:Bayes}
We now return to the Bayesian framework and develop an adaptive procedure that builds on the prior structure introduced in Section~\ref{sec:nonAdaptive}. This prior places $N(0,1)$ distributions on the wavelet coefficients up to a truncation level $\ell$, and sets the remaining coefficients to zero. We adopt an empirical Bayes strategy here and estimate the truncation level $\ell$ using the data-driven rule defined in equation~\eqref{def: hyper:estimator}, which was motivated from the frequentist perspective in the previous section.

The construction of the adaptive aggregated posterior $\widetilde{\Pi}$ proceeds via a two round communication procedure, as before. The first round of communication is identical to the one described in Section~\ref{sec:freq} and we estimate the hyperparameter $\ell$, resulting in an estimator $\hat{\ell}$, which is then broadcast back to the local machines. The resulting local posteriors are:
\[
\theta_{ij}^{(k)}\big|Y^{(k)}\sim \mathds{1}_{[0:\hat{\ell}]}(i)\cdot\mathcal{N}\bigg(\frac{n}{n+1}y_{ij}^{(k)},\frac{m}{n+1}\bigg)+\big(1-\mathds{1}_{[0:\hat{\ell}]}(i)\big)\cdot\delta_0;\quad j=0,\ldots,2^i-1,\; k=1,\ldots,m.
\]
Since the posterior distribution here is fully characterized by its mean, each machine transmits only the posterior means of wavelet coefficients up to level $\hat{\ell}$ to the central machine in the second round of communication. These are then averaged at the central machine to form the final data-driven aggregated posterior $\widetilde{\Pi}$:
\begin{align}
\widetilde{\theta}_{ij}\big|Y^{(1:m)}\sim \mathds{1}_{[0:\hat{\ell}]}(i)\cdot\mathcal{N}\bigg(\frac{n}{m(n+1)}\sum_{k=1}^my_{ij}^{(k)},\frac{1}{n+1}\bigg)+\big(1-\mathds{1}_{[0:\hat{\ell}]}(i)\big)\cdot\delta_0;\quad j=0,\ldots,2^i-1.\label{def: distr:adapt:post}
\end{align}

In practice, we truncate the infinite-dimensional index $i$ at a finite level $I$, which represents the maximum resolution level considered in the implementation. A complete description of the empirical Bayes procedure based on two rounds of communication is provided in Algorithm~\ref{algo}. The following result shows that the adaptive aggregated posterior $\widetilde{\Pi}$ contracts around the true functional parameter of interest $f_0$ at the optimal rate.

\begin{algorithm}[h]
\caption{Two rounds of communication}
\label{algo}
    \KwIn{ $y_{ij}^{(k)}$ as data in each local machine $k=1,\ldots,m$,\;$i=0,\ldots,I,\;j=0,\ldots,2^i-1$}
    Initialize $\ell_{\min} = \texttt{ceil}( \log_2(n) / (1+2 s_{\max}) )$\\
    Initialize $\ell_{\max} = \texttt{floor}( \log_2(n) / (1+2 s_{\min}) )$\\
    Initialize $\mathcal{L} = \{\ell_{\min},\ell_{\min}{+}1,\ldots,\ell_{\max}\}\cap \{0,1,\ldots,I\}$\\
    
    \Comment{Round 1: hyperparameter estimation}\\
    \For{$k=1$ \KwTo $m$}{ 
    \For{$i\in\mathcal{L}$}{
    Compute $T_i^{(k)}=\frac{1}{2^i}\sum_{j=0}^{2^i-1}(y_{ij}^{(k)})^2$\\
    Transmit $T_i^{(k)}$ to the central machine
    }
    }
    \Comment{At the central machine}\\
    \For{$i\in\mathcal{L}$}{
    Compute $T_i=\frac{1}{m}\sum_{k=1}^mT_i^{(k)}$\\
    Compute $\widetilde{T}_i=2^iT_i-2^i\frac{m}{n}$
    }
    Apply Lepskii's method to get the estimate $\hat{\ell}$\\
    \Comment{Round 2: Posterior mean transmission}\\
    \For{$k=1$ \KwTo $m$}{
    Transmit $\frac{n}{n+1}y_{ij}^{(k)}\;\forall i=0,\ldots,\hat{\ell},j=0,\ldots,2^i-1$ to the central machine
    }
    \Comment{At the central machine}\\
    Compute $\widetilde{y}_{ij}=\frac{1}{m}\sum_{k=1}^m\frac{n}{n+1}y_{ij}^{(k)}\;\forall i=0,\ldots,\hat{\ell},j=0,\ldots,2^i-1$\\
    \KwOut{Aggregated posterior $\widetilde{\Pi}$ characterized by means $\widetilde{y}_{ij}$ and variance $1/(n+1)$.} 
\end{algorithm}

\begin{theorem}\label{thm:Bayes:adapt}
The distributed empirical Bayes posterior distribution \eqref{def: distr:adapt:post} adapts to the unknown regularity parameter $s$ and achieves minimax posterior contraction rate, i.e. for every $s\in(s_{\min},s_{\max})$
\begin{align*}
\sup_{f_0\in B_{2,\infty}^{s}(L)}E_{f_0}  \widetilde\Pi \Big (f:\, \big\|f-f_0\big\|_2^2\geq M_n  n^{-2s/(1+2s)}\big|Y^{(1:m)}\Big)\to 0,
\end{align*}
as $n\to\infty$ for arbitrary sequence $M_n$ tending to infinity.
\end{theorem}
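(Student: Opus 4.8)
The plan is to reduce the posterior contraction statement to the frequentist risk bound of Theorem~\ref{thm: freq:adapt} together with a chi-square concentration estimate for the Gaussian spread of the aggregated posterior \eqref{def: distr:adapt:post}. Write $r_n=n^{-2s/(1+2s)}$ and $\bar y_{ij}=\frac1m\sum_{k=1}^m y_{ij}^{(k)}=\theta_{0,ij}+n^{-1/2}\bar Z_{ij}$ with $\bar Z_{ij}\sim\mathcal N(0,1)$. A draw $\widetilde f=\sum_{i=0}^{\hat\ell}\sum_{j}\widetilde\theta_{ij}\psi_{ij}$ from $\widetilde\Pi$ has, conditionally on $Y^{(1:m)}$ (hence on $\hat\ell$), coefficients $\widetilde\theta_{ij}=\tfrac{n}{n+1}\bar y_{ij}+(n+1)^{-1/2}\xi_{ij}$ for $i\le\hat\ell$ with $\xi_{ij}\stackrel{iid}{\sim}\mathcal N(0,1)$ independent of the data. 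First I would use Parseval together with the identity $\widetilde\theta_{ij}-\theta_{0,ij}=-\tfrac{1}{n+1}\theta_{0,ij}+\tfrac{n}{n+1}n^{-1/2}\bar Z_{ij}+(n+1)^{-1/2}\xi_{ij}$ and the elementary inequality $(a+b+c)^2\le3(a^2+b^2+c^2)$, noting that $\sum_{i,j}\theta_{0,ij}^2\le L$ and $\sum_{i\le\hat\ell,j}n^{-1}\bar Z_{ij}^2+\sum_{i>\hat\ell,j}\theta_{0,ij}^2=\|\hat f_n-f_0\|_2^2$ with $\hat f_n$ the estimator \eqref{def: distr:freq}, to obtain the deterministic bound
\[
\|\widetilde f-f_0\|_2^2\ \le\ 3\,\|\hat f_n-f_0\|_2^2\ +\ \frac{3}{n+1}\sum_{i=0}^{\hat\ell}\sum_{j=0}^{2^i-1}\xi_{ij}^2\ +\ \frac{3L}{(n+1)^2}.
\]

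Given this bound, for any $M_n\to\infty$ the posterior mass of $\{\|f-f_0\|_2^2\ge M_nr_n\}$ is at most $\mathds{1}\{\|\hat f_n-f_0\|_2^2\ge M_nr_n/9\}$ plus $\widetilde\Pi\big(\tfrac{3}{n+1}\sum_{i\le\hat\ell,j}\xi_{ij}^2\ge M_nr_n/3\mid Y^{(1:m)}\big)$ plus $\mathds{1}\{3L/(n+1)^2\ge M_nr_n/3\}$, and the last indicator is $0$ for $n$ large since $(n+1)^2r_n\to\infty$. Taking $E_{f_0}$ of the first indicator and applying Markov's inequality together with Theorem~\ref{thm: freq:adapt} bounds it by $9C/M_n\to0$. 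For the middle term, conditionally on the data $\sum_{i\le\hat\ell,j}\xi_{ij}^2\sim\chi^2_D$ with $D=2^{\hat\ell+1}-1$; on the event $A_n=\{2^{\hat\ell}\le C'n^{1/(1+2s)}\}$, which by Theorem~\ref{bits} (the bits transmitted per machine are of order $2^{\hat\ell}\log n$ beyond the first-round $(\log_2 n)^2$ cost) satisfies $P_{f_0}(A_n)\to1$, one has $D\le2C'(n+1)r_n$, so for $M_n\ge36C'$ the threshold $M_n(n+1)r_n/9\ge2D$ and a standard $\chi^2$ tail bound (Laurent–Massart) gives $\widetilde\Pi(\chi^2_D\ge2D\mid Y^{(1:m)})\le e^{-cD}\le e^{-cn^{1/(1+2s_{\max})}}\to0$ for some $c>0$; off $A_n$ the posterior probability is bounded by $1$ and $P_{f_0}(A_n^c)\to0$. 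Collecting the three pieces and taking $\sup_{f_0\in B_{2,\infty}^s(L)}$ yields the claim.

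The hard part will be the control of $\hat\ell$ from above. The deterministic bound $\hat\ell\le\log_2(n)/(1+2s_{\min})$ built into \eqref{def: hyper:estimator} is too crude for the spread term — it would only give the slower rate $n^{-2s_{\min}/(1+2s_{\min})}$ — so one genuinely needs the sharp high-probability bound $2^{\hat\ell}\lesssim n^{1/(1+2s)}$. This is exactly the estimate established when proving the variance part of Theorem~\ref{thm: freq:adapt} and the communication guarantee of Theorem~\ref{bits}: with probability tending to one the Lepskii stopping rule in \eqref{def: hyper:estimator} is already triggered at a level of order $\log_2(n)/(1+2s)$, because for $\ell$ at that level the cumulative energy satisfies $\sum_{i=\ell}^l\sum_j\theta_{0,ij}^2\lesssim2^{-2\ell s}\lesssim2^l/n$, while the fluctuations of $\sum_{i=\ell}^l\widetilde T_i$ are of order $2^{l/2}m^{1/2}/n\le\tau2^l/n$ since $m\le n^{1/(1+2s_{\max})}\le2^l$ for $l\in\mathcal L$. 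A secondary, purely technical point is that $\hat\ell$ — and hence the number of degrees of freedom $D$ — is data dependent; this is handled cleanly by conditioning on $A_n$, on which $D$ lies between $2^{\ell_{\min}+1}-1\to\infty$ and $2C'n^{1/(1+2s)}$, so the chi-square tail is uniformly exponentially small. The remaining steps are routine bookkeeping.
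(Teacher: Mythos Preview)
Your proof is correct and takes a genuinely different route from the paper's. The paper applies Markov's inequality up front to reduce to bounding $E_{f_0}E_{\widetilde\Pi}[\|f-f_0\|_2^2\mid Y^{(1:m)}]$, then writes this as posterior variance plus squared bias of the aggregated mean, and splits on $\{\hat\ell\le\ell^*\}$ versus $\{\hat\ell>\ell^*\}$; the second piece is controlled directly through Lemma~\ref{lem: Lep:help} rather than via Theorem~\ref{bits}, and the first piece essentially reopens the computations of Theorem~\ref{thm: freq:adapt}. By contrast, you first obtain a pathwise inequality $\|\widetilde f-f_0\|_2^2\le 3\|\hat f_n-f_0\|_2^2+3(n+1)^{-1}\chi^2_D+O((n+1)^{-2})$ and then treat each summand separately, invoking Theorems~\ref{thm: freq:adapt} and~\ref{bits} as black boxes; the price is that you need an additional $\chi^2$ tail bound (Laurent--Massart), which the paper avoids because the posterior variance integrates out exactly to $D/(n+1)$. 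Your approach is more modular and reuses the earlier theorems cleanly, while the paper's is slightly more self-contained. Two cosmetic points: the Besov ball condition gives $\sum_{i,j}\theta_{0,ij}^2\le c_sL$ rather than $L$ (harmless), and the uniformity in $f_0$ of $P_{f_0}(A_n^c)\to0$ that you need at the end is indeed the $\inf_{f_0}$ form of Theorem~\ref{bits}, which you could make explicit.
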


While the theoretical analysis was guided by the frequentist analogue, the overall methodology is agnostic to the Bayesian or frequentist paradigm. The two-round strategy is modular and can be applied in either framework. In particular, if uncertainty quantification is of interest, the Bayesian formulation naturally yields credible intervals or posterior summaries through aggregation. However, valid uncertainty quantification is generally difficult to achieve in adaptive nonparametric Bayesian procedures, as credible sets may fail to achieve correct frequentist coverage without additional conditions such as the polished tail assumption \citep{Szabo2015FrequentistSets, Rousseau2020ASYMPTOTICPRIORS}.

\section{Simulation study}\label{sec:Simulation}
We illustrate the performance of our proposed adaptive distributed empirical Bayes estimator, referred to as aDC henceforth, using synthetic data. The underlying true sequence $ \theta_0 = (\theta_{0,ij}) $ is generated as follows:
\[\theta_{0,ij} = 2^{-is}\frac{\sin(j+1)}{j+1};\quad i=0,1,\cdots,15,j=0,\ldots,2^i-1.\]
The resolution level $i$ is truncated at $I=15$ for computational feasibility. This construction satisfies the condition of the Besov space $B^s_{2,\infty}(L)$, as defined in Section~\ref{besov}. The data are generated using the white noise model described in ~\eqref{model}, with the smoothness parameter set to $s=0.5$. The function is reconstructed from the estimated coefficients using the Cohen, Daubechies and Vial construction of a compactly supported, orthonormal, $N$-regular wavelet basis $\{ \psi_{ij} \}_{i \geq 0, j = 0,\dots, 2^i-1}$ of $L^2[0,1]$ with $N=8$ vanishing moments (refer to Section~\ref{sec:wavelet} for details) as 
\[\tilde{f}(t) = \sum_{i=0}^I \sum_{j=0}^{2^i-1} \tilde{\theta}_{ij} \psi_{ij}(t),\]
where $\tilde{\theta}_{ij}$ is the aggregated posterior mean of $\theta_{ij}$. Its performance is evaluated using mean integrated squared error (MISE), calculated as $\text{MISE}(\tilde{f})=(1/H)\sum_{h=1}^H\big\{\tilde{f}(x_h)-f_0(x_h)\big\}^2$, where $\{x_h\}_{h=1}^H$ is a uniform grid of $H=10,000$ points in $[0,1]$. Boxplots summarize the MISE across 100 replications for each simulation setting.

We first investigate how the performance of aDC varies with the total sample size and the number of machines. We vary the total sample size $n\in\{10000,  25000, 50000,  75000, 100000, 125000\}$ while keeping the number of machines fixed at $m=10$. The corresponding results are in Figure~\ref{fig:SampleSize}. The estimation accuracy increases as the sample size increases. This is expected as increasing the total sample size leads to more data being available per machine, leading to better estimation.

\begin{figure}[h]
    \centering
    \begin{subfigure}[t]{0.43\textwidth}
        \centering
        \includegraphics[width=.95\textwidth]{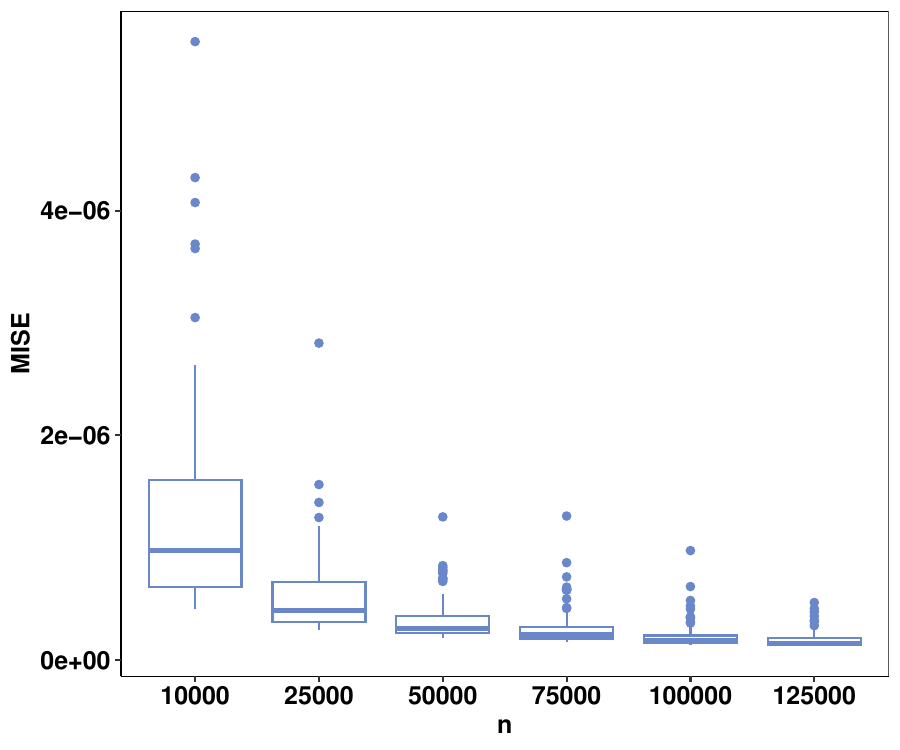}
        \subcaption{Varying total sample size}
        \label{fig:SampleSize}
    \end{subfigure}\hfill
    \begin{subfigure}[t]{0.5\textwidth}
        \centering
        \includegraphics[width=.95\textwidth]{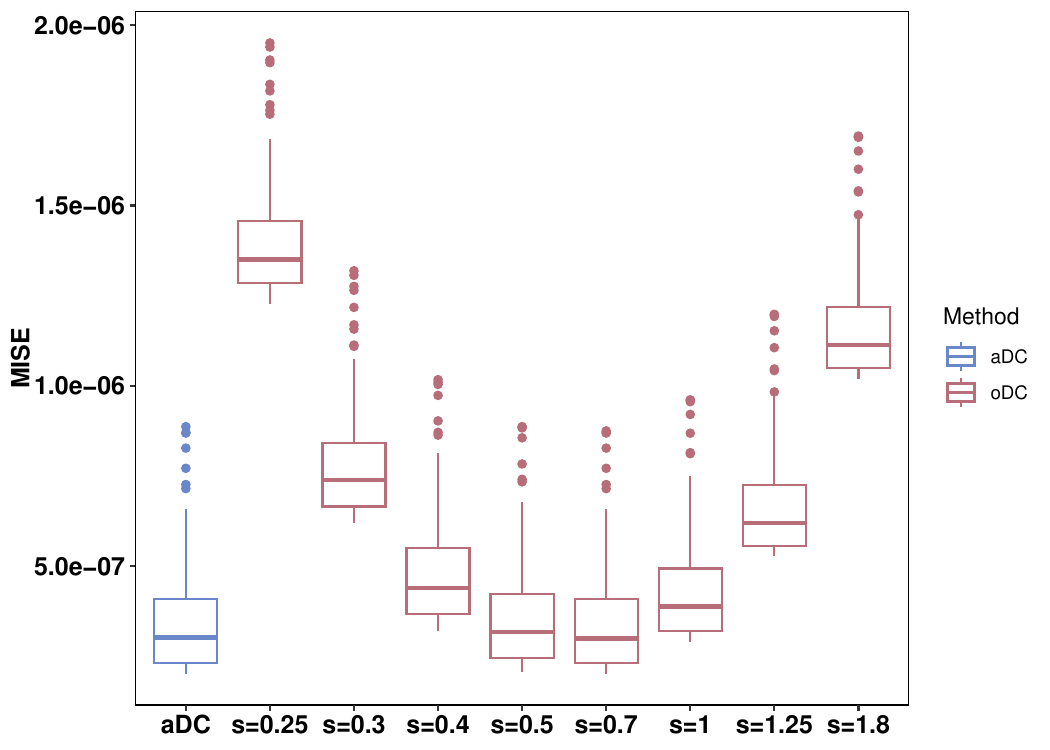}
        \subcaption{Misspecified true smoothness}
        \label{fig:misspecify}
    \end{subfigure}
    \caption{Boxplots of MISEs are based on results from 100 replications}
\end{figure}

To highlight the importance of adaptation in distributed estimation, we next examine the impact of smoothness misspecification on non-adaptive estimators, which rely on oracle knowledge of the underlying regularity. We consider a setting with $n=50000$ samples distributed across $m=10$ machines and the true smoothness is set to $s=0.5$. The adaptive distributed Bayesian estimator (aDC) selects the truncation level via an empirical Bayes rule, while the non-adaptive distributed estimator (oDC), described in Section~\ref{sec:nonAdaptive}, assumes the oracle knowledge of the smoothness and uses the theoretically optimal truncation level to achieve optimal rate. Figure \ref{fig:misspecify} shows boxplots of the MISE for various assumed smoothness value, evaluated over 100 replicates. As the assumed smoothness deviates from the true value, the MISE of the non-adaptive estimator increases rapidly, highlighting its lack of robustness.

We next compare our method (aDC) with the adaptive procedure introduced in \citet{Szabo2022DistributedCommunication}, henceforth referred to as mcDC. The mcDC method uses a one-round communication strategy for distributed function estimation that relies on testing-based techniques to infer the smoothness of the underlying signal. It is shown that simultaneous adaptation to both the minimax estimation rate and the optimal (up to a logarithmic factor) communication budget of order $(L^2n)^{1/(1+2s)}$ is possible only when the smoothness parameter $s$ lies within a restricted range, specifically $s\in[s_{\min},s_{\max}]$ and $s_{\min}<s_{\max}\leq 1/(4p)-1/2$, assuming $m=n^p$ and $m\geq5\log_2(n)$. If $s_{\max}>s_{\min}>1/(4p)-1/2$, there is no one-round distributed method that can, at the same time, transmit the minimal number of bits (up to a small polynomial factor) and achieve the minimax rate (up to a small polynomial factor). We consider both regimes in our simulations to assess the performance and adaptability of our proposed two-round method. 
\begin{figure}[h]
    \centering
    \begin{subfigure}[h]{0.44\textwidth}
        \includegraphics[width=\textwidth]{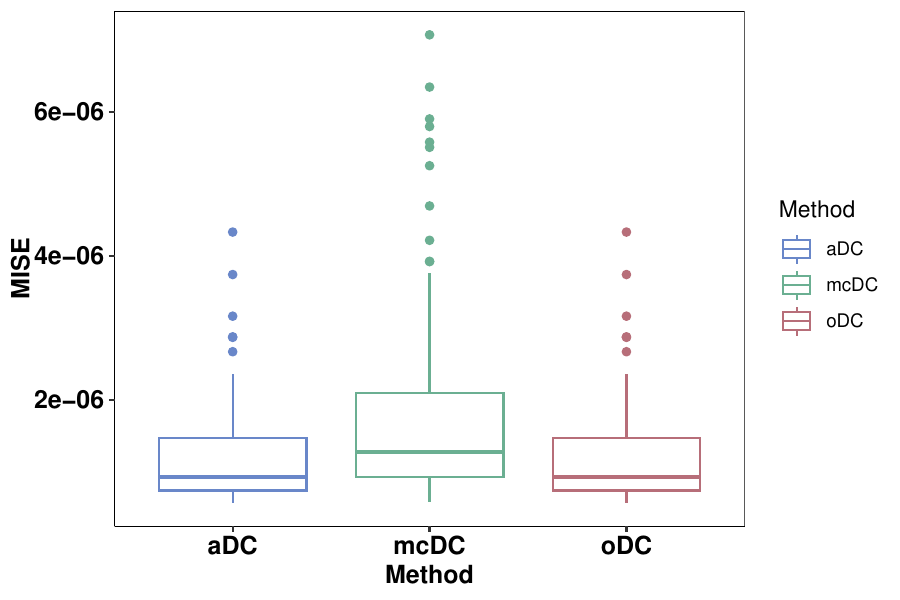}
        \subcaption{$s_{\min}<s_{\max}\leq 1/(4p)-1/2$}
        \label{fig:mcDC1}
    \end{subfigure}\hspace{1.4cm}
    \begin{subfigure}[h]{0.44\textwidth}
        \includegraphics[width=\textwidth]{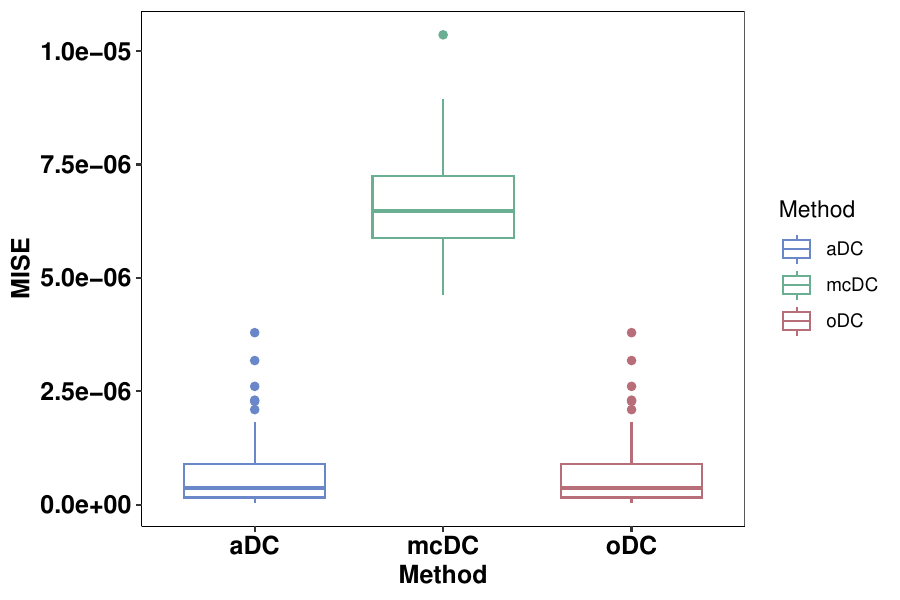}
        \subcaption{$s_{\max}>s_{\min}\geq 1/(4p)-1/2$}
        \label{fig:mcDC2}
    \end{subfigure}
    \caption{Comparison of the performances of aDC, mcDC, and oDC under different scenarios. Boxplots of MISEs are based on results from 100 replications}
\end{figure}

In the first setting, we consider $s=0.45$, $s_{\min}=0.2$ and $s_{\max}=0.5$, with $n=10^4$ total observations and $m=10$ machines. Here, $s_{\min}<s_{\max}\leq 1/(4p)-1/2$, the conditions under which mcDC can theoretically adapt. The corresponding boxplots of MISEs are plotted in Figure~\ref{fig:mcDC1}. In the second setting, we consider $s=1.2$, $s_{\min}=0.51$ and $s_{\max}=1.5$, with $n=10^4$ total observations and $m=10$ machines. This falls in the regime $s_{\max}>s_{\min}\geq 1/(4p)-1/2$, where adaptation is not possible under one round communication. Figure~\ref{fig:mcDC2} shows the corresponding boxplots of the MISEs. Across 100 replications, we compare the estimation error, measured by MISE, of aDC and mcDC and also record the average bits transmitted per machine, counting both rounds for aDC and the coefficient transmissions for mcDC. In the first regime, aDC transmits 3612 bits on average versus 2002 bits for mcDC, while in the second regime, the averages are 520.8 bits and 126 bits respectively. Although mcDC transmits fewer bits per machine, our method (aDC) substantially outperforms mcDC in both regimes, when the tuning parameters in the Lepski-type selection step are calibrated properly. These findings indicate that our two-round communication-based empirical Bayes approach performs reliably across a broad range of smoothness values, including regimes where existing one-round adaptive procedures like mcDC fail to achieve optimal rates. 

Even when communication is not the bottleneck, adaptive methods based on marginal likelihood can still fail under certain signal structures. We illustrate this next by considering a challenging signal in which the energy is concentrated at a single resolution level. As noted in \citet{Szabo2019AnMethods}, such a sparse, localized structure poses difficulties for marginal likelihood-based hyperparameter estimation: although the estimator aggregates information across machines to estimate the truncation level, it fails to capture resolution-specific features critical to recovery. As a result, the estimated signal is over-smoothed and misses key structure. The example illustrated next is inspired by a construction in \citet{Szabo2019AnMethods}, adapted here to evaluate the performance of aDC under extreme sparsity. 

We construct a signal with all coefficients set to zero except at resolution levels $i = 2, 3\ \&\ 4$, with at most 4 active coefficients per level. We consider $\theta_{0,1 1}=0.15,\theta_{0,2 2}=0.135,\theta_{0,3 2}=-0.01,\theta_{0,3 4}=-0.05,\theta_{0,3 7}=0.04$. Data are generated according to the white noise model introduced at the start of this section, with $n = 10^4$ total observations evenly distributed across $m = 10$ machines. We compare the performance of our adaptive distributed estimator (aDC) against the distributed marginal maximum likelihood estimator (MMLE) method described in \citet{Szabo2019AnMethods}, which selects the truncation level by maximizing a marginal likelihood aggregated over machines. For both methods, the signal was reconstructed on a uniform grid of $H=65536$ points over the interval $[0, 1]$. The MISE for aDC was $2.52\times 10^{-8}$, compared to $3.55 \times 10^{-7}$ for the MMLE method. As shown in Figure~\ref{fig:sparse_signal}, the MMLE-based estimator smooths over the localized signal and fails to recover its key features, while our method accurately captures the true structure.

\begin{figure}[h]
    \centering
        \includegraphics[width=0.66\linewidth]{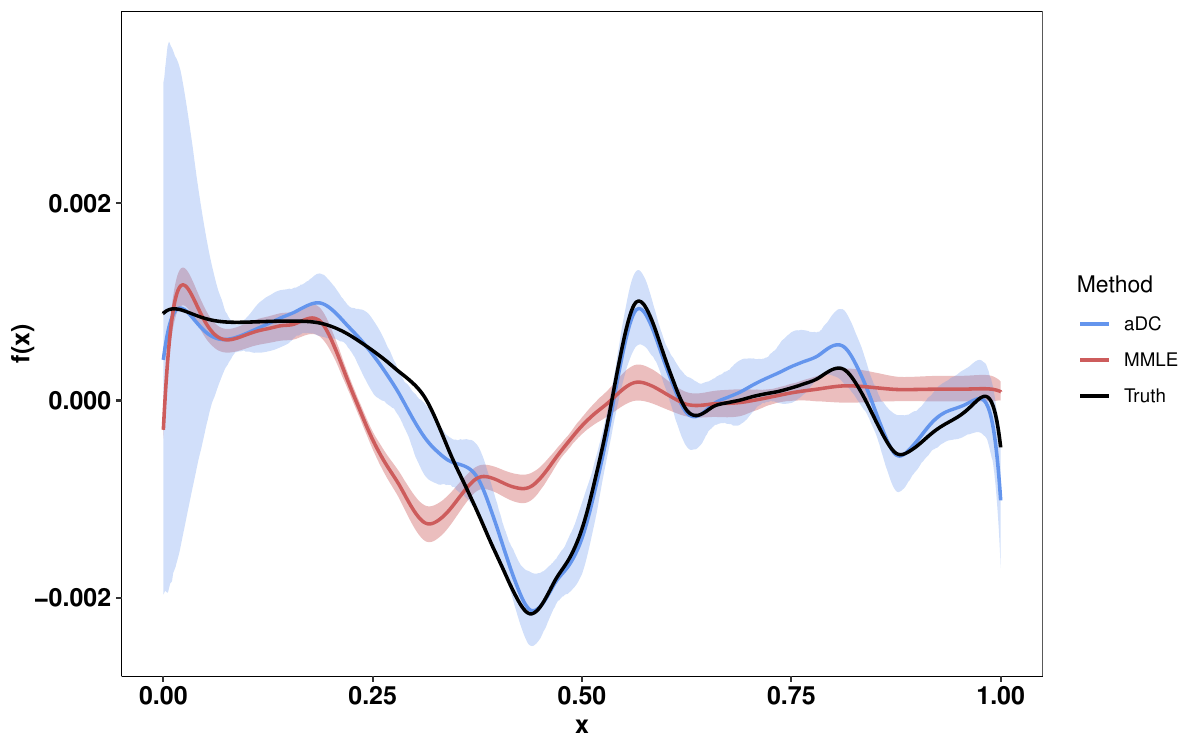}
    \caption{Comparison of aDC and MMLE methods for a sparse signal with energy concentrated at selected resolution levels.}
    \label{fig:sparse_signal}
\end{figure}

\section{Conclusion}\label{sec:Conclusion}
We proposed a novel two-round adaptive distributed estimation procedure for the signal-in-white-noise model. In contrast to one-round methods that attempt to compress all information at once, our approach decouples communication into two stages: an initial round for hyperparameter estimation, followed by a second round that aggregates local summaries using the estimated hyperparameter. The resulting estimator achieves optimal rate of convergence adaptively over a wide range of unknown smoothness levels without specifying tuning parameters. It is computationally efficient, and is agnostic to the inferential paradigm, achieving strong performance under both Bayesian and frequentist formulations. The simulations demonstrate strong empirical performance, particularly in scenarios where existing methods break down.

A natural direction for future work is to extend this two-round adaptive communication framework to more complex models, such as distributed Gaussian process regression. Unlike the signal-in-white-noise setting, GP models involve correlated observations and unknown covariance structures, introducing new challenges for distributed inference. Extending our approach to these settings holds promise for scalable and adaptive learning in spatial and functional data applications.

\bibliographystyle{plainnat}
\bibliography{references}
\newpage
\appendix

\section*{\LARGE Appendix}
\section{Notations}
Let us first summarize the key notations used throughout our paper. In contrast to one-round communication methods, 
\begin{table}[h]
\centering
\caption{Notation reference}
\begin{tabular}{l|l}

\textbf{Symbol} & \textbf{Definition} \\
\hline
$m$ & Number of machines used\\
$n$ & Total sample size\\
$s$ & True underlying smoothness\\
$s_{\min},s_{\max}$ & The minimum and maximum values (respectively) that $s$ is \\
& assumed to take, $s\in(s_{\min},s_{\max})$.\\
$y_{ij}^{(k)}$ & Observation from machine $k$ at resolution level $i$ and location index $j$ \\
$y_{ij}$ & Averaged observation at $(i,j)$, $y_{ij} = \frac{1}{m} \sum_{k=1}^m y_{ij}^{(k)}$ \\
$Y^{(k)}$ & Collection of all the observations stored in machine $k$\\
$Y^{(1:m)}$ & Collection of all the observations across all the machines $k=1,\ldots,m$\\
$\theta_{0,ij}$ & True wavelet coefficient at resolution level $i$ and location index $j$ \\
$Z_{ij}^{(k)}$ & Random noise term at resolution level $i$, location $j$, and machine $k$ \\
$\widetilde{T}_i$ & Signal detection statistic at resolution level $i$,\\
& $\widetilde{T}_i = \sum_{j=0}^{2^i-1} \Big\{ \frac{1}{m} \sum_{k=1}^m \big( Y_{ij}^{(k)} \big)^2 - \frac{2^i m}{n} \Big\}$ \\
$\hat{f}_n(\ell)$ & Empirical estimator, $\hat{f}_n(\ell) = \sum_{i=1}^\ell \sum_{j=0}^{2^i-1} y_{ij} \psi_{ij}$ \\
$c_s$ & $c_s=1/(1-2^{-2s})$\\
$B_n(\ell, f_0)$ & Bias proxy, $c_sB_n(\ell, f_0) = c_s\|f_0\|_{B_{2,\infty}^s}^2 2^{-2\ell s}$ \\
$\|f_0\|_{B_{2,\infty}^s}^2$ & Besov norm, $\|f_0\|_{B_{2,\infty}^s}^2 = \sup_{i \geq 0} 2^{2is} \sum_{j=0}^{2^i-1} \theta_{0,ij}^2$ \\
$\mathcal{L}$& The grid of resolutions used for Lepski, $\mathcal{L}=\Big[\frac{\log_2 (n)}{1+2s_{\max}}, \frac{\log_2 (n)}{1+2s_{\min}}\Big]$.\\
$\hat{\ell}$ & Estimated resolution level,\\
& $\hat{\ell}=\min\big\{\ell\in \mathcal{L}:\, \sum_{i=\ell}^{l}\widetilde{T}_i\leq \tau 2^l/n,\,\forall l\geq \ell, l\in\mathcal{L}  \big\}\wedge \frac{\log_2(n)}{1+2s_{\min}}$\\
$\ell^*$ & Oracle optimal resolution level, $\ell^* = \min \Big\{ \ell\in\mathcal{L}:\, B_n(\ell, f_0) \leq \frac{2^\ell}{n} \Big\}$ \\
\end{tabular}
\end{table}

\section{Proofs}\label{Proof}
\begin{noindentsection}

\subsection{Proof of Theorem \ref{thm: freq:adapt}}\label{sec: prof:thm:freq:adapt}
Let us first note that $2^{2is}\sum_{j=0}^{2^i-1}\theta_{0,ij}^2\leq L$ from the definition of $B_{2,
\infty}^s(L)$, and so we get that $B_n(\ell,f_0)\leq L2^{-2\ell s}$. Note that 
\begin{align}\label{eq:0}
    \|E_{f_0} \hat{f}_n(\ell)-f_0\|_2^2=\sum_{i>\ell}\sum_{j=0}^{2^i-1}\theta_{0,ij}^2\leq c_s\|f_0\|_{B_{2,\infty}^s}^2\sum_{i>\ell}2^{-2is}\leq B_n(\ell,f_0).
\end{align}

We defined $\ell^*=\min\left\{\ell\in\mathcal{L}:B_n(\ell,f_0)\leq2^\ell/n\right\}$, so
\begin{align*}
&B_n(\ell^*,f_0)=c_s\|f_0\|_{B_{2,\infty}^s}^22^{-2\ell^* s}\leq2^{\ell^*}/n\implies c_1n^{1/(1+2s)}\leq 2^{\ell^*},\text{ and}\\
&c_s\|f_0\|_{B_{2,\infty}^s}^2 2^{-2(\ell^*-1) s}\geq B_n(\ell^*-1,f_0)>2^{\ell^*-1}/n\implies c_2n^{1/(1+2s)}>2^{\ell^*},
\end{align*}
where $c_1=(c_s\|f_0\|_{B_{2,\infty}^s}^2)^{1/(1+2s)}$,  and $c_2=(2^{2s}c_s\|f_0\|_{B_{2,\infty}^s}^2)^{1/(1+2s)}$. We thus have
\[c_1n^{1/(1+2s)}\leq 2^{\ell^*}< c_2n^{1/(1+2s)}.\]

We begin by decomposing the risk into two components. For any (possibly data-driven) choice of $\hat{\ell}$, we have
\begin{align}\label{eq:1}
E_{f_0}\|f_0-\hat{f}_n(\hat{\ell})\|_2^2 = E_{f_0}\big(\|f_0-\hat{f}_n(\hat{\ell})\|_2^2 \cdot\mathds{1}_{\hat{\ell}\leq \ell^*}\big) + E_{f_0}\big(\|f_0-\hat{f}_n(\hat{\ell})\|_2^2 \cdot\mathds{1}_{\hat{\ell}> \ell^*}\big).
\end{align}

We deal with the two terms on the right hand side of \eqref{eq:1} separately. First note that
\begin{align}\label{eq:2}
E_{f_0}\big(\|f_0-\hat{f}_n(\hat{\ell})\|_2^2 \cdot\mathds{1}_{\hat{\ell}\leq \ell^*}\big)
&\leq 2E_{f_0}\big(\|\hat{f}_n(\ell^*)-\hat{f}_n(\hat{\ell})\|_2^2 \cdot\mathds{1}_{\hat{\ell}\leq \ell^*}\big) + 2E_{f_0}\big(\|f_0-\hat{f}_n(\ell^*)\|_2^2 \cdot\mathds{1}_{\hat{\ell}\leq \ell^*}\big).
\end{align}

For the first term in \eqref{eq:2}, note that 
\begin{align}\label{eq:4}
E_{f_0}\big(\|\hat{f}_n(\ell^*)-\hat{f}_n(\hat{\ell})\|_2^2 \cdot\mathds{1}_{\hat{\ell}\leq \ell^*} \big)&=E_{f_0}\bigg(\sum_{i=\hat{\ell}+1}^{\ell^*}\sum_{j=0}^{2^i-1} \Big(\frac{1}{m} \sum_{k=1}^{m}y_{ij}^{(k)}\Big)^2 \cdot\mathds{1}_{\hat{\ell}\leq \ell^*}\bigg)\nonumber\\
&\leq 2 E_{f_0}\Bigg(\sum_{i=\hat{\ell}}^{\ell^*}\sum_{j=0}^{2^i-1} \bigg( \theta_{0,ij}^2+  \Big(\frac{1}{\sqrt{mn}} \sum_{k=1}^{m}Z_{ij}^{(k)}\Big)^2\bigg) \cdot\mathds{1}_{\hat{\ell}\leq \ell^*}\Bigg).
\end{align}

The second term on the right hand side of \eqref{eq:4} is bounded by
\begin{align}\label{eq:6a}
\sum_{i=0}^{\ell^*}\sum_{j=0}^{2^i-1} E_{f_0}  \Big(\frac{1}{\sqrt{mn}} \sum_{k=1}^{m}Z_{ij}^{(k)}\Big)^2=\frac{2^{\ell^*+1}-1}{n}\leq  2^{\ell^*+1}/n\leq c_5 n^{-\frac{2s}{1+2s}},
\end{align}
where $c_5=2c_2$. Hence, it remains to deal with the first term of \eqref{eq:4}. For this, note that when $\hat{\ell}\leq \ell^*$,
\begin{align*}
\sum_{i=\hat{\ell}}^{\ell^*}\sum_{j=0}^{2^i-1}\theta_{0,ij}^2 &=\sum_{i=\hat{\ell}}^{\ell^*}\sum_{j=0}^{2^i-1}\bigg(\frac{1}{m}\sum_{k=1}^m (y_{ij}^{(k)})^2-\frac{m}{n}-\frac{2}{m}\sum_{k=1}^m \sqrt{\frac{m}{n}}Z_{ij}^{(k)}\theta_{0,ij}-\frac{1}{n}\sum_{k=1}^m\big( (Z_{ij}^{(k)})^2-1\big)\bigg)\\
&\leq \sum_{i=\hat{\ell}}^{\ell^*}\widetilde{T}_i+\frac{2}{\sqrt{mn}}\sum_{i=\hat{\ell}}^{\ell^*}\sum_{j=0}^{2^i-1}\bigg|\theta_{0,ij}\sum_{k=1}^m Z_{ij}^{(k)}\bigg|+ \frac{1}{n}\sum_{i=0}^{\ell^*}\bigg|\sum_{j=0}^{2^i-1}\sum_{k=1}^m\left( (Z_{ij}^{(k)})^2-1\right)\bigg|.
\end{align*}

Next, using the fact that $|ab|\leq(a^2+b^2)/2$, we get for $\hat{\ell}\leq \ell^*$,
\begin{align*}
    \frac{2}{\sqrt{mn}}\sum_{i=\hat{\ell}}^{\ell^*}\sum_{j=0}^{2^i-1}\Big|\theta_{0,ij}\sum_{k=1}^m Z_{ij}^{(k)}\Big| &= \sum_{i=\hat{\ell}}^{\ell^*}\sum_{j=0}^{2^i-1}\bigg|\theta_{0,ij}\Big(\frac{2}{\sqrt{mn}}\sum_{k=1}^m Z_{ij}^{(k)}\Big)\bigg|\\
    &\leq \sum_{i=\hat{\ell}}^{\ell^*}\sum_{j=0}^{2^i-1} \frac{1}{2}\bigg( \theta_{0,ij}^2+\frac{4}{mn}\Big(\sum_{k=1}^m Z_{ij}^{(k)}\Big)^2\bigg)\\
    &\leq \frac{1}{2}\sum_{i=\hat{\ell}}^{\ell^*}\sum_{j=0}^{2^i-1} \theta_{0,ij}^2 + \frac{2}{mn}\sum_{i=0}^{\ell^*}\sum_{j=0}^{2^i-1}\Big(\sum_{k=1}^m Z_{ij}^{(k)}\Big)^2.
\end{align*}

By combining the preceding two displays, we get
\begin{align}\label{eq:5}
E_{f_0}\bigg(\sum_{i=\hat{\ell}}^{\ell^*}\sum_{j=0}^{2^i-1}\theta_{0,ij}^2\cdot\mathds{1}_{\hat{\ell}\leq \ell^*}\bigg)
&\leq 2E_{f_0} \Big(\sum_{i=\hat{\ell}}^{\ell^*}\widetilde{T}_i\cdot\mathds{1}_{\hat{\ell}\leq \ell^*} \Big) + \frac{4}{mn} \sum_{i=0}^{\ell^*}\sum_{j=0}^{2^i-1}E_{f_0}\bigg(\sum_{k=1}^{m}Z_{ij}^{(k)}\bigg)^2\nonumber\\
&\qquad\qquad+ \frac{2}{n}\sum_{i=0}^{\ell^*}E_{f_0}\bigg|\sum_{j=0}^{2^i-1}\sum_{k=1}^m\Big( (Z_{ij}^{(k)})^2-1\Big)\bigg|.
\end{align}

We deal with the three terms on the right hand side of \eqref{eq:5} separately. The first term is bounded from above by $\tau 2^{\ell^*}/n$ following from the definition of $\hat{\ell}$. It is easy to see that the second term is again bounded from above by a multiple of $2^{\ell^*}/n$. Finally, for the third term, using Jensen's inequality we can say that the expected value of the absolute value of a centered chi-square distribution is bounded above by its standard deviation. Hence,
\begin{align*}
 \frac{1}{n}\sum_{i=0}^{\ell^*}E_{f_0}\left|\sum_{j=0}^{2^i-1}\sum_{k=1}^m\left( (Z_{ij}^{(k)})^2-1\right)\right|\leq \frac{1}{n}\sum_{i=0}^{\ell^*}(2(m2^i))^{1/2} \leq   \frac{\sqrt{2m}}{n}\sum_{i=0}^{\ell^*} (2^{i})^{1/2}\leq \frac{2}{\sqrt{2}-1}\frac{2^{\ell^*}}{n},
\end{align*}
since $m\leq 2^{\ell_{\min}}\leq 2^{\ell^*}$. Thus, we have
\[E_{f_0}\sum_{i=\hat{\ell}}^{\ell^*}\sum_{j=0}^{2^i-1}\theta_{0,ij}^2 \leq c_62^{\ell^*}/n,\text{ where }c_6=2\tau+4+4/(\sqrt{2}-1).\]

Using \eqref{eq:0} and \eqref{eq:6a}, the second term on the right hand side of \eqref{eq:2} is further bounded by
\begin{align}\label{eq:3}
\|E_{f_0}\hat{f}_n(\ell^*)-f_0 \|_2^2+E_{f_0}\|\hat{f}_n(\ell^*)-E_{f_0}\hat{f}_n(\ell^*) \|_2^2\leq c'L2^{-2\ell^* s}+2^{\ell^*+1}/n\leq c_4 n^{-\frac{2s}{1+2s}},
\end{align}
where $c_4=c_2(Lc'+2)$. We now deal with the second term on the right hand side of \eqref{eq:1}. Using Cauchy-Schwartz inequality, 
\begin{align*}
    E_{f_0}\left(\|f_0-\hat{f}_n(\hat{\ell})\|_2^2 \cdot\mathds{1}_{\hat{\ell}> \ell^*}\right)&=\sum_{\ell=\ell^*+1}^{\ell_{\max}} E_{f_0}\left(\|f_0-\hat{f}_n(\ell)\|_2^2 \cdot\mathds{1}_{\hat{\ell}= \ell}\right)\\
&\leq \sum_{\ell=\ell^*+1}^{\ell_{\max}}\Big(E_{f_0}  \big(\|f_0-\hat{f}_n(\ell)\|_2^2 \big)^2\Big)^{1/2} P^{1/2}_{f_0}(\hat{\ell}=\ell).
\end{align*}

Using the fact that $\sqrt{a^2+b^2}\leq a+b$, when $a,b>0$, we get
\begin{align}\label{eq:7.5}
    \Big(E_{f_0}  \big(\|f_0-\hat{f}_n(\ell)\|_2^2 \big)^2\Big)^{1/2} &= \Big(Var\big(\|f_0-\hat{f}_n(\ell)\|_2^2\big) + \big(E_{f_0}\big(\|f_0-\hat{f}_n(\ell)\|_2^2\big)\big)^2\Big)^{1/2}\nonumber\\
    &\leq \Big(Var\big(\|f_0-\hat{f}_n(\ell)\|_2^2\big)\Big)^{1/2} + E_{f_0}\big(\|f_0-\hat{f}_n(\ell)\|_2^2\big).
\end{align}

We now get
\begin{align}\label{eq:7.5a}
    Var\Big(\|f_0-\hat{f}_n(\ell)\|_2^2\Big)&=Var\Big(\sum_{i=0}^{\ell}\sum_{j=0}^{2^i-1}\big(\frac{1}{m}\sum_{k=1}^{m} y_{ij}^{(k)}-\theta_{0,ij}\big)^2\Big)\nonumber\\
    &=Var\bigg(\sum_{i=0}^{\ell}\sum_{j=0}^{2^i-1}\Big(\frac{1}{m}\sum_{k=1}^m\sqrt{\frac{m}{n}}Z_{ij}^{(k)}\Big)^2\bigg)\nonumber\\
    &=\sum_{i=0}^{\ell}\sum_{j=0}^{2^i-1}Var\bigg(\Big(\frac{1}{m}\sum_{k=1}^m\sqrt{\frac{m}{n}}Z_{ij}^{(k)}\Big)^2\bigg)\nonumber\\
    &=\sum_{i=0}^{\ell}\sum_{j=0}^{2^i-1} (2/n^2) \leq c_7 \frac{2^{\ell}}{n^2},
\end{align}
for some constant $c_7>0$. Now,
\begin{align}\label{eq:8}
    E_{f_0}  \|f_0-\hat{f}_n(\ell)\|_2^2 = E_{f_0}  \|f_0-E(\hat{f}_n(\ell))\|_2^2 + E_{f_0}  \|E(\hat{f}_n(\ell))-\hat{f}_n(\ell)\|_2^2\leq c_8 (2^{-2\ell s} + 2^\ell/n),
\end{align}
for some constant $c_8>0$. Using \eqref{eq:7.5a} and \eqref{eq:8}, we get
\begin{align}\label{eq:7.5b}
    \Big(E_{f_0}  \big(\|f_0-\hat{f}_n(\ell)\|_2^2 \big)^2\Big)^{1/2} \leq \sqrt{c_7}\frac{2^{\ell/2}}{n} + c_8 (2^{-2\ell s} + 2^\ell/n) \leq c_9(2^{-2\ell s} + 2^\ell/n),
\end{align}
for a suitably large constant $c_9>0$. Now, using \eqref{eq:7.5b} and Lemma~\ref{lem: Lep:help} we get 
\begin{align}\label{eq:7}
E_{f_0}\left(\|f_0-\hat{f}_n(\hat{\ell})\|_2^2 \mathds{1}_{\hat{\ell}> \ell^*}\right)&\leq c_9\sum_{\ell=\ell^*+1}^{\ell_{\max}}\left(2^{-2\ell s}+2^\ell/n\right) \exp\left(-\frac{c\tau2^\ell}{2m}\right)\nonumber\\
&= c_9 \sum_{\ell=\ell^*+1}^{\ell_{\max}}2^{-2\ell s} \exp\left(-\frac{c\tau2^\ell}{2m}\right) + c_9\sum_{\ell=\ell^*+1}^{\ell_{\max}}\frac{2^{\ell}}{n} \exp\left(-\frac{c\tau2^\ell}{2m}\right).
\end{align}

Now, the first term of \eqref{eq:7} can be bounded by:
\begin{align}\label{eq:9}
    \sum_{\ell=\ell^*+1}^{\ell_{\max}}2^{-2\ell s} \exp\left(-c\tau\frac{2^\ell}{2m}\right) \leq \sum_{\ell=\ell^*+1}^{\infty}2^{-2\ell s}=2^{-2\ell^*s}\frac{2^{-2s}}{1-2^{-2s}}\leq 2^{-2\ell^*s}\leq c_1^{-2s}n^{-2s/(1+2s)},
\end{align}
and the second term of \eqref{eq:7} can be bounded as follows:
\[
\sum_{\ell = \ell^* + 1}^{\ell_{\max}} \frac{2^\ell}{n} \exp\left( -\frac{c\tau 2^\ell}{2m} \right)
\leq \exp\left( -\frac{c\tau  2^{\ell^*}}{2m} \right)  \frac{2^{\ell_{\max} + 1}}{n}.
\]

Now, as $2^{\ell^*}/m>2^{\ell^*}/2^{\ell_{\min}}=n^{1/(1+2s)-1/(1+2s_{\max})}=n^\delta$, where $\delta>0$, so 
\[\exp\left( -c\tau  2^{\ell^*}/{2m} \right) <\exp(-c\tau n^\delta)<n^{-r}\]
for arbitrary $r>0$ for some large enough $n$. Now, if we set $r=2s/(1+2s)-2s_{\min}/(1+2s_{\min})$, then for some suitably large $n$,  we get
\begin{align}\label{eq:10}
     \exp\left( -\frac{c\tau  2^{\ell^*}}{2m} \right)  \frac{2^{\ell_{\max} + 1}}{n}\leq 2n^{-r}n^{-2s_{\min}/(1+2s_{\min})}\leq 2n^{-2s/(1+2s)}.
\end{align}

Combining \eqref{eq:9} and \eqref{eq:10}, we get 
\begin{align*}
E_{f_0}\left(\|f_0-\hat{f}_n(\hat{\ell})\|_2^2 \mathds{1}_{\hat{\ell}> \ell^*}\right)&\leq c_9\cdot c_1^{-2s}n^{-2s/(1+2s)} + 2c_9n^{-2s/(1+2s)}\\
&=c_{10} n^{-2s/(1+2s)},
\end{align*}
where $c_{10}=c_9(c_1^{-2s}+2)$, finishing the proof of our statement.

\subsection{Proof of Theorem \ref{bits}}\label{bits-proof}

In the first round of communication, we send $T_i^{(k)}$'s for each $i\in\mathcal{L}$, i.e., at most $C\log_2(n)$ (where $C>0$ is a large constant) real numbers from each local machine to the global machine. Now, for each $i\in\mathcal{L}$, we have
\[\mathbb{E}(1\vee T_i^{(k)})\leq 1+\mathbb{E}(T_i^{(k)}) = 1+\frac{\sum_{j=0}^{2^i-1}\theta_{0,ij}^2}{2^i} + \frac{m}{n}\leq 1+\frac{L}{(2^i)^{1+2s}}+\frac{n^{1/(1+2s_{\max})}}{n}. \]

Thus, the conditions of Lemma 2.3 from \cite{Szabo2020AdaptiveConstraints} are satisfied and hence, we get that the maximal number of bits required to communicate each $T_i$ is $C\log_2(n)$ up to an error of order $n^{-c}$, for any $c>0$. Hence, in the first round of communication, the total maximal number of bits transferred is $C_1(\log_2(n))^2$ up to an approximation error of order $\log_2(n)n^{-c}$, where $c>0$ is arbitrarily large.

Now, in the second round of communication, we transfer $y_{ij}^{(k)}$ for $i=0,\ldots,\hat{\ell}, j=0,\ldots,2^i-1$,
\begin{align*}
    \mathbb{E}(1\vee y_{ij}^{(k)})\leq 1+\theta_{0,ij}.
\end{align*}

Thus, at most $2^{\hat{\ell}}$ real numbers are transmitted from the local to the central machine and using Lemma 2.3 from \cite{Szabo2020AdaptiveConstraints} we get that at most $\hat{B}_k=C2^{\hat{\ell}}\log_2(n)$ many bits are transmitted from the $k^{th}$ local to the central machine, where the error of approximation is of the order $n^{-c}$, for some arbitrarily large $c>0$. Now, using Lemma~\ref{lem: Lep:help}, we get
\begin{align*}
    P_{f_0}(\hat{\ell}>\ell^*)\leq \sum_{\ell=\ell^*+1}^{\ell_{\max}}\exp(-c\tau2^\ell/m)\leq c_2\frac{m\log_2(n)}{2^{\ell^*}}\leq c_3\log_2(n)n^{-r},
\end{align*}
where $r=\frac{1}{1+2s}-\frac{1}{1+2s_{\min}}>0$. Now, as these constants are free of the choice of $f_0$, we can obtain the uniform bound
\begin{align}
&\sup_{f_0\in B^{s}_{2,\infty}}P_{f_0}(\hat{\ell}>\ell^*)\leq c_3\log_2(n)n^{-r}.\label{eq:tail}
\end{align}

Finally, since $2^{\ell^*}\leq c'n^{1/(1+2s)}$, it follows from \eqref{eq:tail} that the number of bits transmitted from each local machine satisfies
\[\hat{B}_k\leq C\log_2(n)n^{1/(1+2s)},\]
with high probability uniformly over all$f_0\in B^{s}_{2,\infty}$, completing the proof.

\subsection{Proof of Theorem \ref{thm:Bayes:adapt}}\label{sec:thm:Bayes:adapt}
Let $\varepsilon_{n}=n^{-s/(1+2s)}$ and $\hat{f}^{aggr},\hat{f}^{aggr}(\ell)$ the aggregated empirical Bayes posterior mean and the aggregated posterior mean with fixed thresholding hyper-parameter $\ell$, respectively.  In view of Markov's inequality for every $s>0$

\begin{align}\label{eq:11}
&\sup_{f_0\in B_{2\infty}^{s}(L)}E_{f_0}[\tilde{\Pi}_{\hat{\ell}}(\|f-f_0\|_2\geq M_n\varepsilon_n|Y^{1:m})]\nonumber\\
&\qquad\leq \frac{1}{M_n^2\varepsilon_n^2}\sup\limits_{f_0\in B_{2\infty}^{s}(L)}E_{f_0}\left[E_{\tilde{\Pi}_{\hat{\ell}}}\left(\|f-f_0\|_2^2|Y^{1:m}\right)\right]\nonumber\\
&\qquad\leq \frac{1}{M_n^2\varepsilon_n^2}\sup\limits_{f_0\in B_{2\infty}^{s}(L)}E_{f_0}\Big[E_{\tilde{\Pi}_{\hat{\ell}}}\left(\|f-f_0\|^2_2|Y^{1:m}\right)\cdot\mathds{1}_{\hat{\ell}\leq\ell^*}+ E_{\tilde{\Pi}_{\hat{\ell}}}\left(\|f-f_0\|^2_2|Y^{1:m}\right)\cdot\mathds{1}_{\hat{\ell}>\ell^*}\Big].
\end{align}

Now, we first deal with the second term in \eqref{eq:11}. 
\begin{align}\label{eq:11a}
    &E_{f_0}\left[E_{\tilde{\Pi}_{\hat{\ell}}}(\|f-f_0\|^2_2|Y^{1:m})\cdot\mathds{1}_{\hat{\ell}>\ell^*}\right]\nonumber\\
    &=E_{f_0}\left[\left\{E_{\tilde{\Pi}_{\hat{\ell}}}(\|f-E(f|Y^{1:m})\|^2_2|Y^{1:m})+E_{\tilde{\Pi}_{\hat{\ell}}}(\|E(f|Y^{1:m})-f_0\|^2_2|Y^{1:m})\right\}\cdot\mathds{1}_{\hat{\ell}>\ell^*}\right]\nonumber\\
    &=E_{f_0}\left[\left\{\sum_{i=0}^{\hat{\ell}}\sum_{j=0}^{2^i-1}\frac{1}{n+1}+\|E(f|Y^{1:m})-f_0\|^2_2\right\}\cdot\mathds{1}_{\hat{\ell}>\ell^*}\right]\nonumber\\
    &=E_{f_0}\left[\left\{\frac{2^{\hat{\ell}+1}-1}{n+1}+ \sum_{i=0}^{\hat{\ell}}\sum_{j=0}^{2^i-1}\left(\frac{n}{n+1}\bar{y}_{ij}-\theta_{0,ij}\right)^2 + \sum_{i>\hat{\ell}}\sum_{j=0}^{2^i-1}\theta_{0,ij}^2\right\}\cdot\mathds{1}_{\hat{\ell}>\ell^*}\right]\nonumber\\
    &\leq E_{f_0}\left[\left\{\frac{2^{\hat{\ell}+1}-1}{n}+c'2^{-2\hat{\ell}s}\right\}\cdot\mathds{1}_{\hat{\ell}>\ell^*}\right] + E_{f_0}\left[\sum_{i=0}^{\hat{\ell}}\sum_{j=0}^{2^i-1}\left(\frac{n}{n+1}\bar{y}_{ij}-\theta_{0,ij}\right)^2\cdot\mathds{1}_{\hat{\ell}>\ell^*}\right].
\end{align}

Using Lemma~\ref{lem: Lep:help} and adapting the arguments used in the proof of Theorem~\ref{thm: freq:adapt}, we get that the first term in \eqref{eq:11a}
\begin{align}
    E_{f_0}\left[\left\{\frac{2^{\hat{\ell}+1}-1}{n}+c_12^{-2\hat{\ell}s}\right\}\cdot\mathds{1}_{\hat{\ell}>\ell^*}\right] \leq c_2 n^{-2s/(1+2s)}.\nonumber
\end{align}
We now deal with the second term in \eqref{eq:11a}. It can be written as
\begin{align}
  & \sum_{\ell=\ell^*+1}^{\ell_{\max}} E_{f_0}\bigg[\sum_{i=0}^{\ell}\sum_{j=0}^{2^i-1}\Big(\frac{n}{n+1}\bar{y}_{ij}-\theta_{0,ij}\Big)^2 \cdot\mathds{1}_{\hat{\ell}=\ell}\bigg]\nonumber\\
  &\leq \sum_{\ell=\ell^*+1}^{\ell_{\max}} \bigg[Var_{f_0}^{1/2} \bigg(\sum_{i=0}^{\ell}\sum_{j=0}^{2^i-1}\Big(\frac{n}{n+1}\bar{y}_{ij}-\theta_{0,ij}\Big)^2\bigg) + E_{f_0}\bigg(\sum_{i=0}^{\ell}\sum_{j=0}^{2^i-1}\Big(\frac{n}{n+1}\bar{y}_{ij}-\theta_{0,ij}\Big)^2\bigg)\bigg]P_{f_0}^{1/2}(\hat{\ell}=\ell)\nonumber\\
  &= \sum_{\ell=\ell^*+1}^{\ell_{\max}} \bigg[\bigg(\sum_{i=0}^{\ell}\sum_{j=0}^{2^i-1}\Big(\frac{2n^2}{(n+1)^4}\Big(1+\frac{2\theta_{0,ij}^2}{n}\Big)\Big)^2\bigg)^{1/2} + \sum_{i=0}^{\ell}\sum_{j=0}^{2^i-1}\bigg(\frac{\theta_{0,ij}^2}{(n+1)^2}+\frac{n}{(n+1)^2}\bigg)\bigg]P_{f_0}^{1/2}(\hat{\ell}=\ell)\nonumber\\
  &\leq \sum_{\ell=\ell^*+1}^{\ell_{\max}} \bigg[c_3\frac{2^{\ell/2}}{n} + \frac{c_4}{n^2}+c_5\frac{2^{\ell}}{n}\bigg]P_{f_0}^{1/2}(\hat{\ell}=\ell)\nonumber\\
  &\leq \sum_{\ell=\ell^*+1}^{\ell_{\max}} c_6\frac{2^{\ell}}{n}P^{1/2}(\hat{\ell}=\ell) + c_7\frac{\log_2 (n)}{n^2}\leq c_8 n^{-2s/(1+2s)},\nonumber
\end{align}
using \eqref{eq:10}. Next, we deal with the first term of \eqref{eq:11}. By elementary computations
\begin{align}\label{eq:12}
 &E_{f_0}\left[E_{\tilde{\Pi}_{\hat{\ell}}}\left(\|f-f_0\|^2_2|Y^{1:m}\right)\mathds{1}_{\hat{\ell}\leq\ell^*}\right]\nonumber\\
 &= E_{f_0}\left(\|\hat{f}^{aggr}-f_{0}\|_2^2\mathds{1}_{\hat{\ell}\leq \ell^*}\right)+E_{f_0}\left(\sum_{i=0}^{\hat{\ell}}\sum_{j=0}^{2^i-1}\frac{1}{n+1}\mathds{1}_{\hat{\ell}\leq \ell^*}\right)\nonumber\\
&\leq E_{f_0}\left(\|\hat{f}_n(\hat{\ell})-f_{0}\|_2^2\mathds{1}_{\hat{\ell}\leq \ell^*}\right)+E_{f_0}\left(\|\hat{f}_n(\hat{\ell})-\hat{f}^{aggr}\|_2^2\mathds{1}_{\hat{\ell}\leq \ell^*}\right)   +c_62^{\ell^*}/n\nonumber\\
&\leq  c_7n^{-2s/(1+2s)}+ E_{f_0}\|\hat{f}_n(\ell^*)-\hat{f}^{aggr}(\ell^*)\|_2^2,
\end{align}
where the last inequality in \eqref{eq:12} follows from the proof of Theorem \ref{thm: freq:adapt}. It remains to deal with the second term of \eqref{eq:12}. Note that
\begin{align*}
E_{f_0}\|f_n(\ell^*)-\hat{f}^{aggr}(\ell^*)\|_2^2&= \frac{1}{(n+1)^2}E_{f_0}\sum_{i=0}^{\ell^*}\sum_{j=0}^{2^i-1} \left(\frac{1}{m} \sum_{k=1}^{m}y_{ij}^{(k)}\right)^2\\
&\leq  \frac{2}{(n+1)^2} \sum_{i=0}^{\ell^*}\sum_{j=0}^{2^i-1} \left( \theta_{0,ij}^2+  E_{f_0}\left(\frac{1}{\sqrt{mn}} \sum_{k=1}^{m}Z_{ij}^{(k)}\right)^2\right)\\
&\leq c_8 {n^{-2s/(1+2s)}}/{n^2},
\end{align*}
finishing the proof of our statement.
\end{noindentsection}

\section{Technical lemmas}
\begin{noindentsection}

\begin{lemma}\label{lem: Lep:help}
For $f\in B_{2,\infty}^s(L)$ we have for every $\ell>\ell^*$ that
\begin{align*}
P_{f_0}(\hat{\ell}=\ell)\leq C e^{-c\tau 2^{\ell}/m},\\
\end{align*}
for some constants $C,c>0$.
\end{lemma}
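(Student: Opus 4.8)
The plan is a Lepski-type oracle argument applied directly to the aggregated energy statistics $\widetilde T_i$. Fix $\ell>\ell^*$; then $\ell-1\geq\ell^*\geq\ell_{\min}$, so $\ell-1\in\mathcal L$. Since by \eqref{def: hyper:estimator} the quantity $\hat\ell$ is the smallest element of $\mathcal L$ passing the screening (capped at $\log_2(n)/(1+2s_{\min})$), the event $\{\hat\ell=\ell\}$ entails that level $\ell-1$ fails the screening, i.e. there is some $l\in\mathcal L$ with $l\geq\ell-1$ and $\sum_{i=\ell-1}^l\widetilde T_i>\tau 2^l/n$. A union bound therefore gives
\[
P_{f_0}(\hat\ell=\ell)\;\leq\;\sum_{l=\ell-1}^{\ell_{\max}}P_{f_0}\Big(\sum_{i=\ell-1}^l\widetilde T_i>\tau 2^l/n\Big),
\]
and it is enough to bound each summand by $e^{-c\tau 2^l/m}$ and then absorb the at most $\log_2(n)$ terms.

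For each $l$ I would use the decomposition already recorded in the paper, $\widetilde T_i=\sum_j\theta_{0,ij}^2+A_i+B_i$, where $A_i:=\frac{2}{\sqrt{mn}}\sum_j\theta_{0,ij}\sum_k Z_{ij}^{(k)}$ is centred Gaussian with variance $\frac{4}{n}\sum_j\theta_{0,ij}^2$ and $B_i:=\frac{1}{n}\sum_{j,k}\big((Z_{ij}^{(k)})^2-1\big)$ is a centred rescaled chi-square. The deterministic input is the defining property of $\ell^*$: because $\ell-1\geq\ell^*$, arguing exactly as at the beginning of the proof of Theorem~\ref{thm: freq:adapt} (using $B_n(\ell^*,f_0)\leq 2^{\ell^*}/n$ together with the Besov bound $\sum_j\theta_{0,ij}^2\leq\|f_0\|_{B^s_{2,\infty}}^2 2^{-2is}$) one gets
\[
\sum_{i=\ell-1}^l\sum_{j=0}^{2^i-1}\theta_{0,ij}^2\;\leq\;2\cdot 2^l/n .
\]
Hence the event $\{\sum_{i=\ell-1}^l\widetilde T_i>\tau 2^l/n\}$ forces $\sum_{i=\ell-1}^l(A_i+B_i)>(\tau-2)2^l/n$, and therefore forces at least one of $\sum_{i=\ell-1}^l A_i>(\tau-2)2^l/(2n)$ or $\sum_{i=\ell-1}^l B_i>(\tau-2)2^l/(2n)$, where $\tau-2>0$ by hypothesis.

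These two probabilities I would control by standard concentration. The Gaussian sum $\sum_{i=\ell-1}^l A_i$ is, by independence across resolutions, centred Gaussian with variance $\frac{4}{n}\sum_{i=\ell-1}^l\sum_j\theta_{0,ij}^2\leq 8\cdot 2^l/n^2$, so the usual tail bound gives $P(\sum_i A_i>(\tau-2)2^l/(2n))\leq\exp\{-c(\tau-2)^2 2^l\}$, which is already below $\exp\{-c(\tau-2)^2 2^l/m\}$ since $m\geq1$. The quadratic term is the one carrying the factor $1/m$: $\sum_{i=\ell-1}^l B_i=n^{-1}(\chi^2_N-N)$ with $N=m\sum_{i=\ell-1}^l 2^i$, so $m2^l\leq N\leq m2^{l+1}$, and the Laurent--Massart/Bernstein inequality $P(\chi^2_N-N>u)\leq\exp\{-c\min(u^2/N,u)\}$ with $u=(\tau-2)2^l/2$ yields $\min(u^2/N,u)\geq c'\tau 2^l/m$ (the first branch dominating when $m$ is large, the second when $m$ is small). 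Combining the two estimates, each summand is at most $2\exp\{-c\tau 2^l/m\}$.

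Finally, since $l\geq\ell-1$ implies $2^l\geq 2^\ell/2$, summing over $l\in\{\ell-1,\dots,\ell_{\max}\}$ produces at most $2\log_2(n)\exp\{-\frac{c}{2}\tau 2^\ell/m\}$; and because $2^\ell/m\geq 2^{\ell^*}/m\geq c_1 n^{1/(1+2s)-1/(1+2s_{\max})}=c_1 n^{\delta}$ with $\delta>0$ (using $m\leq n^{1/(1+2s_{\max})}$ and $s<s_{\max}$), the logarithmic prefactor is swallowed by the exponential for all large $n$, leaving $P_{f_0}(\hat\ell=\ell)\leq C e^{-c\tau 2^\ell/m}$ after enlarging $C$ to cover finitely many small $n$. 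The one genuinely delicate point is the chi-square concentration step and the accounting that extracts exactly the power $2^\ell/m$ in the exponent: this is where the hypothesis $\tau>2$, the variance-dominance definition of $\ell^*$, and the constraint $m\leq n^{1/(1+2s_{\max})}$ are all used at once; the remainder is routine bookkeeping.
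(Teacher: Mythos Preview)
Your proof is correct and follows essentially the same route as the paper's: the same union bound over $l\geq\ell-1$, the same decomposition of $\widetilde T_i$ into signal, Gaussian cross term, and centred chi-square, the same use of $B_n(\ell^*,f_0)\leq 2^{\ell^*}/n$ to control the signal energy, and the same Gaussian/chi-square tail bounds (your Laurent--Massart is the paper's Gin\'e--Nickl Theorem~3.1.9). The only cosmetic differences are that you keep the correct factor $2$ in the cross term (the paper drops it), you work with $(\tau-2)$ rather than $(\tau-1)$ due to a slightly looser energy bound, and you are more explicit than the paper about summing over $l$ and absorbing the $\log n$ prefactor via $2^\ell/m\geq n^\delta$.
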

\begin{proof}
Take any $\ell>\ell^*$ and denote by $\ell^-=\ell-1\geq\ell^*$. If $\hat{\ell}=\ell$, then it is equivalent to say that the following is true:
\[\bigg\{\forall l\geq\ell, \sum_{i=\ell}^{l}\widetilde{T}_i\leq\tau2^l/n\bigg\}\text{ and } \bigg\{\exists l \in \mathcal{L}, l\geq\ell^-\text{ such that }\sum_{i=\ell^-}^{l}\widetilde{T}_i>\tau2^l/n\bigg\}.\]

So,
\begin{align*}
P_{f_0}(\hat{\ell}=\ell) &\leq P_{f_0}\bigg(\exists l \in \mathcal{L},l\geq\ell^-\text{ such that }\sum_{i=\ell^-}^{l}\widetilde{T}_i>\tau2^l/n\bigg) \leq \sum_{l \in \mathcal{L}, l\geq \ell^- }P_{f_0}\Big(\sum_{i=\ell^-}^{l} \widetilde{T}_i>\tau 2^{l}/n\Big).
\end{align*}

Recall that 
\[\widetilde{T}_i=\sum_{j=0}^{2^i-1}\theta_{0,ij}^2+\frac{1}{\sqrt{n}}\sum_{j=0}^{2^i-1}\theta_{0,ij}\frac{1}{\sqrt{m}}\sum_{k=1}^{m}Z_{ij}^{(k)}+ \sum_{j=0}^{2^i-1} \frac{1}{n}\sum_{k=1}^m\Big((Z_{ij}^{(k)})^2-1\Big).\]

Furthermore, note that if $C_s=1/(1-2^{-2s})$, then
\begin{align*}
\sum_{i=\ell^-}^{l}\sum_{j=0}^{2^i-1}\theta_{0,ij}^2\leq B_n(\ell^{-},f)\leq B_n(\ell^*,f)\leq \frac{2^{\ell^*}}{n}\leq  \frac{2^l}{n}.
\end{align*}

Hence,
\begin{align*}
&P_{f_0}\left(\sum_{i=\ell^-}^{l} \widetilde{T}_i>\tau 2^{l}/n\right)\\
&\leq P_{f_0}\left(\sum_{i=\ell^-}^{l} \left\{\frac{1}{\sqrt{mn}}\sum_{j=0}^{2^i-1}\theta_{0,ij}\sum_{k=1}^mZ_{ij}^{(k)}+\sum_{j=0}^{2^i-1} \frac{1}{n}\sum_{k=1}^m\left((Z_{ij}^{(k)})^2-1\right)\right\}>(\tau-1) 2^{l}/n\right)\\
&\leq P_{f_0}\left( \sum_{i=\ell^-}^{l}\sum_{j=0}^{2^i-1} \frac{1}{n}\sum_{k=1}^m((Z_{ij}^{(k)})^2-1)\geq \frac{(\tau-1)2^{l}}{2n} \right)\\
&\quad+ P_{f_0}\left(  \sum_{i=\ell^-}^{l}\sum_{j=0}^{2^i-1}\theta_{0,ij}\sum_{k=1}^mZ_{ij}^{(k)}>\frac{\tau-1}{2}\,\frac{2^l\sqrt{m}}{\sqrt{n}}  \right).
\end{align*}

Furthermore, note that in view of Theorem 3.1.9 of \cite{Gine2015MathematicalModels}, we get for $\tau>2$,
\begin{align*}
P_{f_0}\left( \sum_{i=\ell^-}^{l}\sum_{j=0}^{2^i-1} \frac{1}{n}\sum_{k=1}^m((Z_{ij}^{(k)})^2-1)\geq \frac{(\tau-1)2^{l}}{2n} \right) &\leq\exp\left(-\frac{(\tau-1)^22^{2l}}{4\{\sum_{i=\ell^-}^lm2^i+(\tau-1)2^l/2\}}\right)\\
&\leq\exp\left(-\frac{(\tau-1)^22^{2l}}{4\{m2^{l+1}+(\tau-1)m2^{l+1}\}}\right)\\
&\leq  \exp\{-\frac{(\tau/2)^22^{2l} }{4\tau m 2^{l+1}} \}
\leq \exp\{-d_1 \tau2^\ell/m\},
\end{align*}
for some constant $d_1>0$. Furthermore, using standard upper bound for Gaussian tail probabilities, we get
\begin{align*}
P_{f_0}\left( \Big| \sum_{i=\ell^-}^{l}\sum_{j=0}^{2^i-1}\theta_{0,ij} \sum_{k=1}^mZ_{ij}^{(k)} \Big|>\frac{\tau-1}{2}\,\frac{2^l\sqrt{m}}{\sqrt{n}}  \right)&\leq  \exp\left(-\frac{(\tau-1)^22^{2l}m}{8n\sum_{i=\ell^-}^l\sum_{j=0}^{2^i-1}(\theta_{0,ij}^2m)}\right)\\
&\leq \exp\left(-\frac{(\tau-1)^22^{2l}m}{8n(m2^l/n)}\right)\\
&\leq \exp\left(-\frac{(\tau/2)^22^{2l}m}{8n(m2^l/n)}\right)\leq \exp(-d_2\tau2^\ell) ,
\end{align*}
for some constant $d_2>0$. This concludes the proof.
\end{proof}
\end{noindentsection}

\section{Definition and notation for wavelets}\label{sec:wavelet}
In this section, we introduce the wavelet basis used throughout the paper. A more detailed description can be found in \citet{Gine2015MathematicalModels}. Since our domain is the compact interval $[0,1]$, we require an orthonormal wavelet basis that is properly defined near the boundaries. To this end, we adopt the boundary-corrected construction of \citet{Cohen1993WaveletsTransforms}, known as the Cohen-Daubechies-Vial (CDV) wavelets. The CDV construction modifies the standard compactly supported Daubechies wavelets near the edges of $[0,1]$ to preserve orthonormality while maintaining $N$-regularity and $N$ vanishing moments. For any $N \in \mathbb{N}$, Daubechies' construction yields a father wavelet $\varphi(\cdot)$ and a mother wavelet $\psi(\cdot)$, each with $N$ vanishing moments \citep{Daubechies1992TenWavelets}. Their supports are contained in $[0, 2N-1]$ and $[-N+1, N]$, respectively. Suppose that we translate the father wavelet by $-N+1$ so that it is supported in $[-N+1,N]$ as well, and for notational convenience, we still denote it by $\varphi(\cdot)$. As usual, we define:  $\varphi_{ij}(x)=2^{i/2}\varphi(2^{i}x-k)$, $\psi_{ij}(x)=2^{i/2}\psi(2^{i}x-k)$.

We first construct a orthonormal system in $L^2([0,\infty))$. For a $i_0=i_o(N)$ such that $2^{i_0}\geq 2N$, we define for $j=0,1,\cdots,N-1$:
\[\widetilde{\varphi}_{i_0j}=\sum_{n=j}^{2N-2}\binom{n}{j}2^{i_0/2}\varphi(2^{i_0}x+n-N+1),\;x\geq0.\]

Gram-Schmidt procedure is then applied on them to obtain $\{ \varphi_{i_0j}^\text{left} : j=0,\ldots,N-1\}$ with support contained in $[0,(2N-1)/2^{i_0}]$. Now, the family $\{\varphi_{i_0j}^\text{left},\varphi_{i_0m} : j=0,\ldots,N-1, m\geq N\}$ is an orthonormal system in $L^2\big([0,\infty)\big)$ and generates all polynomials of degree less than or equal to $N-1$. The procedure is repeated on $(-\infty,1]$ to get orthonormal edge basis functions $\{ \varphi_{i_0j'}^\text{right} : j'=-N,\ldots,-1\}$ near the endpoint 1. It is supported in $[1-(2N-1)/2^{i_0},1]$. Thus, put together, the $2^{i_0}-2N$ standard Daubechies father wavelets $\varphi_{i_0m}$ supported in the interior of $[0,1]$, and the $2N$ edge basis functions $\varphi_{i_0j}^\text{left}, \varphi_{i_0j'}^\text{right}$, denoted henceforth by
\[\{\varphi_{i_0j}^{bc}:j=0,\ldots,2^{i_0}-1\}\]
forms an orthonormal system in $L^2([0,1])$, and produces polynomials of degree up to $N-1$ on $[0,1]$.

To construct the corresponding edge-adjusted mother wavelet functions, we first consider $[0,\infty)$, and define:
\[\widetilde{\psi}_{i_0j}=\sqrt{2}\varphi_{i_0j}^\text{left}(2\cdot)-\sum_{m=0}^{N-1}\big\langle\sqrt{2}\varphi_{i_0j}^\text{left}(2\cdot), \varphi_{0m}^\text{left}\big\rangle\,\varphi_{0m}^\text{left},\;j=0,\ldots,N-1,\]
which after Gram-Schmidt orthogonalization yields the orthonormal system:
\[\{\psi_{i_0j}^\text{left},\psi_{i_0m}:j=0,\ldots,N-1,m\geq N\}\]
of $L^2\big([0,\infty)\big)$. Repeating the process over $(-\infty,1]$, we obtain the orthonormal system
\begin{align*}
&\Big\{\psi_{i_0j}^\text{left}, \psi_{i_0j'}^\text{right},\psi_{i_0m}:j=0,\ldots,N-1,j'=-1,\ldots,-N,m=N,\ldots,2^J-N-1\Big\}\\
&\equiv\{\psi_{i_0j}^{bc}:j=0,\ldots,2^{i_0}-1\}
\end{align*}
in $L^2\big([0,1]\big)$. For $i\geq i_0$, we define:
\[\psi_{ij}^\text{left}=2^{(i-i_0)/2}\psi_{i_0j}^\text{left}(2^{i-i_0}\cdot), \psi_{ij}^\text{right}=2^{(i-i_0)/2}\psi_{i_0j}^\text{right}(2^{i-i_0}\cdot)\]

We denote these functions henceforth by: 
\[\{\psi_{lk}^{bc}:k=0,\ldots,2^l-1,l\geq i_0\}.\]

Using these, we construct the wavelet basis functions:
\[
\{ \varphi^{bc}_{i_0k}, \psi^{bc}_{ij} : i > i_0,\; k\in \{0, \ldots, 2^{i_0}-1\},\; j \in \{0, \ldots, 2^i-1\} \}.
\]

For convenience, we define \( \psi^{bc}_{i_0j} := \varphi^{bc}_{i_0j} \), and index the full basis as \( \psi^{bc}_{ij} \) for all \( i \geq i_0 \) and \( j \in \{0, \ldots, 2^i-1\} \). With this basis, any function \( f \in L_2[0,1] \) can be written as:
\[
f = \sum_{i = i_0}^{\infty} \sum_{j=0}^{2^i-1} \theta_{ij} \psi^{bc}_{ij}, \quad \text{with} \quad \theta_{ij} = \langle f, \psi^{bc}_{ij} \rangle.
\]

By the orthonormality of the wavelet basis, the \( L_2 \)-norm of \( f \) satisfies
\[
\|f\|_2^2 = \sum_{i = i_0}^{\infty} \sum_{j=0}^{2^i-1} \theta_{ij}^2.
\]

For simplicity of notation, without loss of generality, we set $i_0=0$ in our paper. 

\end{document}